\def\pp{{\mathfrak{p}}}\def\mm{{\mathfrak{m}}}
\def\kk{{\Bbbk}}
\def\qq{{\mathfrak{q}}}
  \def\m{{\mathfrak{m}}} \def\Z{{\mathbb{Z}}}\def\N{{\mathbb{N}}}
\def\O{{\mathcal{O}}}
\def\ZZ{{\mathbb{Z}}}
 \def\R{{\mathbb{R}}} \def\C{{\mathbb{C}}}
\def\Hom{{\mathrm{Hom}}}  
\def\Spec{{\mathrm{Spec\; }}}
\def\Proj{{\mathrm{Proj\; }}}
\def\Ext{\operatorname{Ext}}
\def\height{\operatorname{ht}}
\def\CC1{{\mathsf{CC}_1(k)}}
\def\Pic{\operatorname{Pic}}
\DeclareMathOperator{\reg}{reg}
\DeclareMathOperator{\depth}{depth}
\DeclareMathOperator{\chara}{char}
\DeclareMathOperator{\Supp}{Supp}
\theoremstyle{plain}
\newtheorem{theorem}{Theorem}[section]
\newtheorem{corollary}[theorem]{Corollary}
\newtheorem{proposition}[theorem]{Proposition}
\newtheorem{definition}[theorem]{Definition}
\newtheorem{lemma}[theorem]{Lemma}
\newtheorem{question}[theorem]{Question}
\newtheorem{eg}[theorem]{Example}
\newtheorem{remark}[theorem]{Remark}
\newtheorem{setup}[theorem]{Set up}
\newtheorem*{theorem*}{Theorem}
\newcommand \tv[1]{\textcolor{violet}{#1}}
\begin{document}

\title{Regularity, singularities and $h$-vector of graded algebras}
\date{\today}

\author[H. Dao]{Hailong Dao}
\address{Department of Mathematics, University of Kansas,
Lawrence, KS 66045-7523, USA} \email{hdao@ku.edu}

\author[L. Ma] {Linquan Ma}
\address{Department of Mathematics, Purdue University, West Lafayette, IN 47906, USA}
\email{ma326@purdue.edu}

\author[M. Varbaro]{Matteo Varbaro}
\address{Dipartimento di Matematica,  Universit\`a di Genova
Via Dodecaneso, 35
16146 Genova, Italy}
\email{varbaro@dima.unige.it}

\subjclass[2000]{Primary 13A02, 13A35, 14B05, 14B15, 13D45}

\baselineskip 16pt \footskip = 32pt

\begin{abstract}
Let $R$ be a standard graded algebra over a field. We investigate how the singularities of $\Spec R$ or $\Proj R$ affect the $h$-vector of $R$, which is the coefficients of the numerator of its  Hilbert series. The most concrete consequence of our work asserts that  if $R$ satisfies Serre's condition $(S_r)$ and have reasonable singularities (Du Bois on the punctured spectrum or $F$-pure), then $h_0,\dots, h_r\geq 0$. Furthermore the multiplicity of $R$ is at least $h_0+h_1+\dots +h_{r-1}$.  We also prove that equality in many cases forces $R$ to be Cohen-Macaulay.  The main technical tools are sharp bounds  on regularity of certain $\Ext$ modules, which can be viewed as Kodaira-type vanishing statements for Du Bois and $F$-pure singularities. Many corollaries are deduced, for instance that nice singularities of small codimension  must be Cohen-Macaulay. Our results build on and extend previous work by de Fernex-Ein,  Eisenbud-Goto, Huneke-Smith, Murai-Terai  and others.
\end{abstract}
\thanks{}

\maketitle

\section{Introduction}
Let  $R=\kk[x_1,\dots, x_n]/I$ be a standard graded algebra over a field $\kk$. Suppose $R$ is a domain. How many independent quadrics can $I$ contain? Such a question has long been studied in algebraic geometry, and some beautiful answers are known, at least to experts. For example, if $\kk$ is algebraically closed, and $I$ is prime and contains no linear forms, then the number of quadratic minimal generators of $I$ is at most  $\binom{n-d+1}{2}$ where $d=\dim R$. Furthermore, equality happens if and only if $R$ is a variety of minimal degree (see Theorem \ref{h2} for a generalization of this result to non-reduced schemes).

In another vein, a striking result by Bertram-Ein-Lazarsfeld (\cite[Corollary 2]{BEL}) states that if $\Proj R $ is smooth and the sum of $n-d$ highest degrees among the minimal generators of $I$ is less than $n$, then $R$ is Cohen-Macaulay. We will also extend this result to mild singularities, see below.

The common theme in the aforementioned statements is how the singularities of $R$ affect its Hilbert function. This article grew out of an attempt to better understand this phenomenon. It turns out that such numerical information is most conveniently expressed via the $h$-vector. Recall that we can write the Hilbert series of $R$
 \[H_R(t)= \sum_{i\in\N} (\dim_{\kk}R_i)t^i = \frac{h_0+h_1t+ \cdots + h_st^s}{(1-t)^d}. \]
We will call the vector of coefficients $(h_0,h_1,\ldots ,h_s)$ the $h$-vector of $R$. The bound on the number of quadrics mentioned above can be easily seen to be equivalent to the statement that $h_2\geq 0$. Furthermore, it is well-known and easy to prove that if $R$ is Cohen-Macaulay then $h_i\geq 0$ for all $i$.  A very concrete motivation for our work comes from the following question:

\begin{question}\label{mainques}
Suppose $R$ satisfies Serre's condition $(S_r)$. When is it true that $h_i \geq 0$ for $i=0,\ldots , r$?
\end{question}

An answer for this question when $I$ is a square-free monomial ideal is contained in a beautiful result of  Murai-Terai  (the $h$-vector of a simplicial complex $\Delta$ equals the $h$-vector of its Stanley-Reisner ring $\kk[\Delta]$).

\begin{theorem*}[\cite{MT}]
Let $\Delta$ be a simplicial complex with $h$-vector $(h_0,h_1,\ldots ,h_s)$. If $\kk[\Delta]$ satisfies $(S_r)$ for some field $\kk$, then $h_i\geq 0$ for $i=0,\ldots , r$.
\end{theorem*}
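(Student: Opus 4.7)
My plan is to translate the Serre condition $(S_r)$ on $\kk[\Delta]$ into a combinatorial vanishing statement on $\Delta$, and then extract non-negativity of the low-degree $h$-vector from it. First I would invoke Hochster's formula for the graded local cohomology of a Stanley--Reisner ring:
\[
\dim_\kk H^i_\mm(\kk[\Delta])_{-\alpha} \;=\; \dim_\kk \tilde H_{i-|\sigma|-1}\bigl(\mathrm{link}_\Delta(\sigma);\kk\bigr),
\]
where $\sigma = \mathrm{supp}(\alpha) \in \Delta$. Combined with the standard characterization of $(S_r)$ in terms of local cohomology vanishing, this yields Terai's criterion: $\kk[\Delta]$ satisfies $(S_r)$ if and only if for every face $\sigma\in\Delta$ (including $\sigma=\emptyset$), $\tilde H_i(\mathrm{link}_\Delta(\sigma);\kk)=0$ for all $i<\min(\dim\mathrm{link}_\Delta(\sigma),\,r-1)$.

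Next I would pass to an Artinian reduction, in order to trade the $h$-vector for the dimensions of graded pieces of an honest ring. Assuming $\kk$ infinite (else extend scalars), $(S_r)$ forces $\depth \kk[\Delta]\ge\min(r,d)$, so generic linear forms $\theta_1,\ldots,\theta_r$ form a regular sequence; extend to a full linear system of parameters $\theta_1,\ldots,\theta_d$, set $R_k=\kk[\Delta]/(\theta_1,\ldots,\theta_k)$, and iterate the short exact sequence
\[
0\to (0:_{R_k}\theta_{k+1})(-1)\to R_k(-1)\xrightarrow{\theta_{k+1}} R_k\to R_{k+1}\to 0.
\]
This yields $\dim_\kk (R_d)_i = h_i + (\text{non-negative contributions from } (0:_{R_k}\theta_{k+1}))$. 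It would then suffice to show that the kernels $(0:_{R_k}\theta_{k+1})$ vanish in degrees $<r$.

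The third step---identifying those kernels with reduced simplicial homology of links of faces of $\Delta$ and invoking the Terai vanishing from Step 1---is the main obstacle. It requires a careful multigraded analysis, most cleanly carried out via the Koszul complex of the $\theta_i$'s paired with the multigraded structure of $\kk[\Delta]$. An alternative route, likely closer to the Murai--Terai argument, is a purely combinatorial induction on $r$ and $\dim\Delta$, using the decomposition $\Delta=\mathrm{star}_\Delta(v)\cup\mathrm{del}_\Delta(v)$ for a well-chosen vertex $v$, together with the facts that $\kk[\mathrm{link}_\Delta(v)]$ inherits $(S_{r-1})$ while, under a genericity assumption on $v$, $\kk[\mathrm{del}_\Delta(v)]$ inherits $(S_r)$---reducing the assertion to smaller complexes by induction, with base cases $r=0,1$ handled by $h_0=1$ and $h_1=f_0-d\ge 0$.
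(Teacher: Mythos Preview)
Your outline shares with the paper the idea of cutting down by a generic linear system of parameters, but the crucial control mechanism is different, and in your version it is not actually supplied.

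First, a small but real error: the formula you write for the Artinian reduction is not
\[
\dim_\kk (R_d)_i \;=\; h_i \;+\; (\text{non-negative contributions}).
\]
Iterating the exact sequence gives
\[
p_{R_d}(t)\;=\;p_R(t)\;+\;\sum_{k=0}^{d-1} t\,(1-t)^{\,d-k-1}\,H_{\,0:_{R_k}\theta_{k+1}}(t),
\]
and the factors $(1-t)^{d-k-1}$ have alternating signs, so the correction in a fixed degree is not non-negative in general. What \emph{is} true---and what makes your ``suffices'' claim correct---is that if every kernel $(0:_{R_k}\theta_{k+1})$ vanishes in degrees $<r$, then all correction terms live in degrees $\ge r+1$, so $h_i(R)=\dim_\kk(R_d)_i\ge 0$ for $i\le r$.

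The genuine gap is Step~3. Once you pass to $R_k=\kk[\Delta]/(\theta_1,\dots,\theta_k)$ with $k\ge r$, you have left the world of Stanley--Reisner rings: $R_k$ is not $\kk[\Gamma]$ for any complex $\Gamma$, so Hochster's formula no longer computes $H^0_\m(R_k)$ directly, and the Terai criterion for $\Delta$ says nothing immediate about $(0:_{R_k}\theta_{k+1})$. Your proposed ``careful multigraded analysis via the Koszul complex'' is exactly the missing work; it is not a routine step. Note also that controlling $H^0_\m(R_{k+1})_{<r}$ requires controlling $H^1_\m(R_k)_{<r-1}$, and inductively all $H^i_\m(R)_{<r-i}$; so what you really need is a full package of low-degree vanishing for \emph{all} local cohomology modules, not just the zeroth. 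Your alternative star/deletion/link induction is also not a proof as written: the vertex $v$ is a fixed element of a combinatorial object, so there is no ``genericity assumption on $v$'' available, and $(S_r)$ need not pass to $\kk[\mathrm{del}_\Delta(v)]$.

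The paper avoids this obstacle by working with the condition $(MT_r)$: $\reg\Ext_S^{\,n-i}(R,\omega_S)\le i-r$ for all $i<d$. This single regularity bound (i) encodes precisely the full package $H^i_\m(R)_{<r-i}=0$ needed above, (ii) is preserved under a generic hyperplane section by an elementary $\Ext$ argument (Proposition~\ref{mtr1}), and (iii) is established for $\kk[\Delta]$ not through Hochster's formula but by observing that Stanley--Reisner rings are $F$-pure in positive characteristic (resp.\ Du~Bois in characteristic~$0$), invoking Proposition~\ref{sp} (resp.\ Proposition~\ref{p:koda}) to get $\reg\Ext_S^{\,n-i}(R,\omega_S)\le\dim\Ext_S^{\,n-i}(R,\omega_S)$, and then using $(S_r)$ via Proposition~\ref{algSl} to bound that dimension by $i-r$. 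Once $(MT_r)$ holds, the hyperplane-section induction (Theorem~\ref{mtr3}) runs cleanly. In short, the paper replaces your combinatorial Step~3 by a regularity statement that is both easier to propagate and valid far beyond the monomial case.
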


In general, some additional assumptions are needed as shown in Remark \ref{rem:negh2} where, for any $n\geq 3$, it is constructed a standard graded $\kk$-algebra $R$ satisfying $(S_{n-1})$ and having $h$-vector $(1,n,-1)$. It even turns out that such an $R$ has  Castelnuovo-Mumford regularity 1 and is Buchsbaum. Contrary to this, we can surprisingly show the following, which is our main technical result:

\begin{theorem}\label{main0}
Let $R$ be a standard graded algebra over a field and $(h_0,\ldots ,h_s)$ the  $h$-vector of $R$. Assume $R$ satisfies $(S_r)$ and either:
\begin{enumerate}

\item $\chara \kk=0$ and $X=\Proj R$ is Du Bois.
\item $\chara \kk=p>0$ and $R$ is $F$-pure.
\end{enumerate}
Then $h_i\geq 0$ for $i=0,\ldots , r$. Also, $h_r+ h_{r+1}+\dots+h_s \geq 0$, or equivalently $R$ has multiplicity at least $h_0+h_1+\dots +h_{r-1}$.

If furthermore $R$ has Castelnuovo-Mumford regularity less than $r$ or $h_i=0$ for some $i\leq r$,  then $R$ is Cohen-Macaulay.
\end{theorem}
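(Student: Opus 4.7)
The plan is to extract the $h$-vector from an Artinian reduction of $R$ and to control the error terms via a Kodaira-type vanishing coming from the $F$-pure or Du Bois hypothesis. After passing to an infinite extension of $\kk$, choose generic linear forms $\ell_1,\dots,\ell_d \in R_1$ and set $B_i := R/(\ell_1,\dots,\ell_i)$ and $K_i := (0 :_{B_{i-1}} \ell_i)$. Since $(S_r)$ gives $\depth R \geq \min(r,d)$, if $d \leq r$ then $R$ is Cohen-Macaulay and all assertions are classical; so assume $d > r$. Then $K_1 = \dots = K_r = 0$, and splicing the short exact sequences $0 \to K_i \to B_{i-1}(-1) \xrightarrow{\ell_i} B_{i-1} \to B_i \to 0$ produces the identity
\[
(1-t)^d H_R(t) \;=\; H_{B_d}(t) \;-\; \sum_{i=r+1}^{d}(1-t)^{d-i}\, H_{K_i}(t).
\]
Since $B_d$ is Artinian, proving $h_j \geq 0$ for $j \leq r$ reduces to showing that this correction sum vanishes in every degree $\leq r$.

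The technical heart is a sharp regularity bound for the $\Ext$-modules $\Ext^{n-j}_S(R, S(-n))$, equivalently for $H^j_{\mathfrak{m}}(R)$ with $j < d$, proved separately in the two settings: $R$ is $F$-pure in characteristic $p$, or $\Proj R$ is Du Bois in characteristic $0$. This is the Kodaira-type vanishing advertised in the abstract. Under $(S_r)$ these local cohomologies already vanish for $j < r$, so the new content is a degree bound for $r \leq j < d$. I would then propagate this bound along the hyperplane sections via the long exact sequences in local cohomology, bounding $H^0_{\mathfrak{m}}(B_{i-1})$ — and hence the submodule $K_i \subset H^0_{\mathfrak{m}}(B_{i-1})$ — in terms of the $H^j_{\mathfrak{m}}(R)$. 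The target is $(K_i)_j = 0$ for all $j \leq r$ and all $i > r$. I expect the main obstacle to lie in formulating and proving this Kodaira bound cleanly in both characteristic regimes, and in verifying that the propagation is sharp in each degree.

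Granting the vanishing in the previous paragraph, the identity collapses to $h_j = \dim_\kk (B_d)_j \geq 0$ for $j = 0, \dots, r$, giving the first assertion. Evaluating at $t=1$ gives $e(R) = \dim_\kk B_d - \dim_\kk K_d$; combined with the low-degree vanishing of $K_d$ coming from the same bound, this yields $e(R) \geq h_0 + \dots + h_{r-1}$, equivalently $h_r + \dots + h_s \geq 0$.

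For the rigidity statement, suppose $\reg R < r$. Then the $h$-vector is concentrated in degrees $< r$, so the identity becomes a polynomial equality $h(t) = H_{B_d}(t)$, and comparing coefficients in every degree forces each $K_i = 0$; hence $\ell_1,\dots,\ell_d$ is a regular sequence on $R$, so $R$ is Cohen-Macaulay. If instead $h_i = 0$ for some $i \leq r$, then $(B_d)_i = 0$; since $B_d$ is standard graded Artinian, $(B_d)_j = 0$ for all $j \geq i$, and substituting back into the identity while tracking degrees again forces every $K_{i'}$ to vanish, giving Cohen-Macaulayness.
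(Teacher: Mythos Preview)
Your reduction to an Artinian quotient via generic hyperplane sections and tracking the kernels $K_i$ is essentially the paper's approach, just packaged as a single Hilbert-series identity rather than the paper's inductive Propositions on the $(MT_r)$ condition. The Kodaira-type bound you need is exactly $\reg \Ext_S^{n-j}(R,\omega_S)\le \dim \Ext_S^{n-j}(R,\omega_S)$, which together with $(S_r)$ and Proposition~\ref{algSl} yields $(K_i)_{<r}=0$; this gives the non-negativity of $h_0,\dots,h_r$ and, after a small extra computation you did not write out (comparing $\dim K_d$ with $\dim(N/\ell_dN)$ for $N=H^0_\m(B_{d-1})$), the multiplicity bound as well.

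The genuine gap is in both rigidity arguments. For $\reg R<r$ you assert that the $h$-vector is concentrated in degrees $<r$, but this is false for non--Cohen--Macaulay rings: already $R=\kk[x,y]/(x^3,x^2y)$ has $\reg R=2$ yet $h$-vector $(1,1,1,-1)$, so $h_3\ne 0$. What actually works is the paper's inductive step: $(MT_r)$ forces $H^0_\m(R)_{<r}=0$ while $\reg H^0_\m(R)\le\reg R<r$ forces $H^0_\m(R)_{\ge r}=0$, so $H^0_\m(R)=0$; then $\ell_1$ is regular, $\reg(R/\ell_1R)=\reg R<r$, and one inducts. For $h_i=0$ you correctly deduce $(B_d)_{\ge i}=0$, but ``tracking degrees forces every $K_{i'}$ to vanish'' does not follow: the correction term $\sum_{i'} (1-t)^{d-i'}tH_{K_{i'}}(t)$ lives in degrees $>r$ and can perfectly well be nonzero there, producing negative $h_j$ for $j>r$ without any contradiction. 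The paper's argument here is more delicate: one shows by induction that $M'=M/H^0_\m(M)$ is Cohen--Macaulay of regularity $\le i-1$, and then a syzygy comparison (writing $M=F/P$, $M'=F/Q$, and using that $Q$ is generated in degrees $\le i$ while $(Q/P)_{\le i}=0$) forces $Q=P$, hence $H^0_\m(M)=0$.
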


Note that Theorem \ref{main0} generalizes the work of Murai-Terai, since a Stanley-Reisner ring is Du Bois in characteristic $0$ and $F$-pure in characteristic $p$. On the other hand the class of varieties satisfying the assumptions of Theorem \ref{main0} are much more than the ones given by Stanley-Reisner rings: e.g. varieties $X=\Proj R$ with only simple normal crossing singularities with $\depth R\geq r$ in characteristic 0, or smooth globally $F$-split varieties $X=\Proj R$ with $\depth R\geq r$ in positive characteristic.
The key point is to establish good bounds on certain $\Ext$ modules, just like what Murai and Terai did, but for ideals that are not necessarily monomial and square-free. We remark that, for square-free monomial ideals, the inequality $h_r+ h_{r+1}+\dots+h_s \geq 0$ has later been improved in \cite{GPSY}.

One can deduce other surprising consequences from our results. We discuss one of them now.  There is a well-known but mysterious theme in algebraic geometry that ``nice singularities of small codimension" should be Cohen-Macaulay. Perhaps the most famous example is Hartshorne's conjecture that a smooth subvariety in $\mathbb P_{\kk}^n$ of codimension $e$ has to be a complete intersection provided $e< n/3$. Although this conjecture is wide-open, numerous partial results has been obtained, and many of them assert that such variety has to be projectively Cohen-Macaulay.  One or our corollaries can be viewed as another confirmation of this viewpoint (extending \cite[Corollary 1.3]{dFE}).

\begin{corollary}[Corollary \ref{cor2}]
Let $R=\kk[x_1,\dots, x_n]/I$ be a standard graded algebra over a field $\kk$ of characteristic $0$ with $e=\height I$ and $d=\dim R$. Let $d_1 \geq d_2 \geq \dots $ be the degree sequence of a minimal set of generators for $I$. Assume that $R$ is unmixed, equidimensional, Cohen-Macaulay in codimension $l$ and $X=\Proj R$ has only MJ-log canonical singularities (e.g., only Du Bois singularities). If $e+l \geq d_1+\dots +d_e$, then $R$ is Cohen-Macaulay.
\end{corollary}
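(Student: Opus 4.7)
The plan is to reduce the statement to Theorem \ref{main0} (case (1)) applied with $r = l+1$. Once we verify that (i) $R$ satisfies Serre's condition $(S_{l+1})$, (ii) $X = \Proj R$ is Du Bois, and (iii) $\reg R < l+1$, the final \emph{furthermore} clause of Theorem \ref{main0} forces $R$ to be Cohen-Macaulay, which is exactly the conclusion desired.

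Item (i) is a translation of the hypothesis: for an unmixed equidimensional standard graded algebra one has $\dim R_{\mathfrak{p}} = \height \mathfrak{p}$ for every prime, so the assumption that $R_{\mathfrak{p}}$ is Cohen-Macaulay whenever $\height \mathfrak{p} \leq l$ delivers $\depth R_{\mathfrak{p}} = \height \mathfrak{p} \geq \min(\height \mathfrak{p}, l+1)$ in that range, which is (under the customary convention) precisely $(S_{l+1})$. Item (ii) is the theorem of Kov\'acs that MJ-log canonical singularities are Du Bois; this is the same bridge exploited by de Fernex and Ein in \cite{dFE}.

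Item (iii) is where the numerical hypothesis enters and is the main obstacle. Rewriting $e + l \geq d_1 + \cdots + d_e$ as $d_1 + \cdots + d_e - e \leq l$, it suffices to establish the regularity bound
\[
\reg R \;\leq\; d_1 + d_2 + \cdots + d_e - e
\]
whenever $\Proj R$ is MJ-log canonical. This is the MJ-log canonical analogue of the Bertram-Ein-Lazarsfeld bound, and should follow either from the corresponding regularity statement of de Fernex and Ein in \cite{dFE}, or intrinsically from the Kodaira-type vanishing for $\Ext$ modules in the Du Bois setting advertised as the technical backbone of this paper: after choosing a regular sequence of $e$ forms $f_1, \dots, f_e \in I$ of degrees $d_1, \dots, d_e$, the complete intersection $R' = \kk[x]/(f_1, \dots, f_e)$ is Cohen-Macaulay with $\reg R' = d_1 + \cdots + d_e - e$, and the regularity of $R$ can be controlled by the Du Bois/MJ-log canonical vanishing applied to the $\Ext$ modules governing the short exact sequence $0 \to I/(f_1, \dots, f_e) \to R' \to R \to 0$.

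Once the bound is in hand we have $\reg R \leq l < l+1 = r$, so Theorem \ref{main0} delivers that $R$ is Cohen-Macaulay. The entire proof therefore hinges on the regularity bound in (iii); the verifications of (i) and (ii) are essentially formal, and the invocation of the main theorem is automatic.
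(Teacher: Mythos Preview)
Your reduction to Theorem~\ref{main0} breaks down at step (i). ``Cohen--Macaulay in codimension $l$'' means only that $R_{\pp}$ is Cohen--Macaulay for primes of height $\leq l$; Serre's condition $(S_{l+1})$ additionally demands $\depth R_{\pp}\geq l+1$ at every prime of height $>l$, and nothing in the hypotheses provides this. A ring can be Cohen--Macaulay on its entire punctured spectrum and still have $\depth R=1$. In the language of Propositions~\ref{algSl} and~\ref{p:koda}, Cohen--Macaulay in codimension $l$ together with the Du Bois property gives only the uniform bound $\reg K_i\leq \dim K_i\leq d-l-1$, not the graded bound $\reg K_i\leq i-(l+1)$ required for $(MT_{l+1})$. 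So Theorem~\ref{main0} cannot be invoked with $r=l+1$, and no smaller value of $r$ is useful since the hypotheses only guarantee $(S_1)$.

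The paper's argument does not pass through Theorem~\ref{main0}. It works directly with the deficiency modules $K_i=\Ext_S^{n-i}(R,\omega_S)$ and combines two independent inputs: the de~Fernex--Ein vanishing (from the MJ-log canonical hypothesis, via \cite{Niu} and \cite{dFE}) forces $[K_i]_{\leq n-r-1}=0$ for $r=d_1+\cdots+d_e$, while Proposition~\ref{p:koda} (Du Bois, via \cite{dFD}) together with Cohen--Macaulay in codimension $l$ gives $\reg K_i\leq d-l-1$. The numerical hypothesis $e+l\geq r$ is exactly $d-l-1\leq n-r-1$, so each $K_i$ is generated in degrees where it has already been shown to vanish; hence $K_i=0$ for $i<d$ and $R$ is Cohen--Macaulay. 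Your intuition in (iii) that $\reg R\leq d_1+\cdots+d_e-e$ is recoverable from the de~Fernex--Ein vanishing is correct, but a regularity bound on $R$ alone is not enough: one still needs the $K_i$ to have small \emph{dimension}, and that is precisely what ``Cohen--Macaulay in codimension $l$'' supplies --- on the $K_i$ side, not through an $(S_r)$ condition on $R$.
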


For instance, the corollary asserts that if $I$ is generated by quadrics and $\Proj R$ has mild singularities, then $R$ is Cohen-Macaulay provided that it is Cohen-Macaulay in codimension $e$. Under stronger assumptions, it follows that $R$ is Gorenstein or even complete intersection, see Corollary \ref{cor3}.

We now briefly describe the content and organization of the paper. Section \ref{prelim} establishes all the important preparatory results. It contains a careful analysis of a property which we call $(MT_r)$. This is a condition on regularity of certain $\Ext$ modules studied by Murai-Terai (these modules are sometimes called deficiency modules in the literature, see \cite{Sc}). It turns out that this regularity condition is precisely what we need to prove all the statements of the main theorem. We give a self-contained and independent proof that $(MT_r)$ implies non-negativity of $h_i$ for $i\leq r$, when equalities occur,  and also that $(MT_r)$ is preserved under generic hyperplane section. Our proofs work also for modules.

What remains, which is our main technical contribution, is to establish the condition $(MT_r)$ for algebras with nice singularities. We achieve this goal in Sections \ref{main} (characteristic $0$) and \ref{mainp} (positive characteristic). Our results there give strong bounds on regularity of the Matlis dual of the local cohomology modules of such algebras, and are perhaps of independent interests. As mentioned above, they can be viewed as Kodaira-vanishing type statements.

In Section \ref{apps} we put everything together to prove Theorem \ref{main0} and give several applications. Our bounds imply stringent conditions on Hilbert functions of algebras satisfying assumptions of Theorem \ref{main0}, see Corollary \ref{cor1}. One can also show that certain algebras with nice singularities and small codimension must be Cohen-Macaulay, see Corollary \ref{cor2}. We also write down perhaps the strongest possible statement for $h_2(R)$, which can be viewed as generalization of the classical bound $e(R) \geq 1+ e$ where $e$ is the embedding codimension, and the equality case which forces $R$ to be rings of minimal multiplicity.

In the last section, we propose an open question which can be viewed as a very strong generalization of our result and prove a first step toward such problem, which generalized one of the main results of Huneke and Smith in \cite{HuSm}.

\subsection*{Acknowledgement} The first author is partially supported by Simons Foundation Collaboration Grant 527316. The second author is partially supported by NSF Grant DMS \#2302430, NSF FRG Grant \#1952366, and a fellowship from the Sloan Foundation, and was partially supported by NSF Grant DMS \#1836867/1600198 and \#1901672 when working on this article. The third author is supported by MIUR Grant PRIN \#2020355B8Y. The second author would like to thank Karl Schwede for many discussions on Du Bois singularities. The first and third authors would like to thank Kangjin Han for useful discussions on $h$-vectors and singularities, which started this project. We also thank Tommaso de Fernex and S\'andor  Kov\'acs for some helpful email exchanges. Finally, we thank the referee for very detailed comments that lead to improvement of this paper.

\section{Preliminaries}\label{prelim}

\begin{setup}\label{setup}
We will use the following notations and conventions throughout the paper:

\begin{enumerate}[(i)]
\item $n$ is a positive integer;
\item  $\kk$ is a field;
\item $S$ is the standard graded polynomial ring in $n$ variables over $\kk$;
\item $\mm$ is the irrelevant ideal of $S$;
\item $\omega_S\cong S(-n)$ is the canonical module of $S$;
\item $I\subset S$ is a homogeneous ideal of height $e$;
\item  $R=S/I$ is the quotient of dimension $d=n-e$;
\item  $X=\Proj R$ is the projective scheme associated to $R$ ($\dim X=d-1$).
\item We say that $R$ ($X=\Proj R$) has some property in codimension $t$ if $R_{\pp}$ has that property whenever $\pp\in\Spec R$ ($\pp\in X$) has height at most $t$.
\item We say that $R$ ($X=\Proj R$) satisfies Serre's condition $(S_i)$ if $\depth R_{\pp}\geq \min\{i, \height \pp\}$ for all $\pp\in \Spec R$ (for all $\pp\in X$).
\item Given a graded $S$-module $M$ by $M_{\leq r}$ we mean its submodule generated by the elements in $M$ of degree $\leq r$.
\item For a nonzero finitely generated graded $S$-module $M$ of dimension $d$, let $H_M(t)$ denote the Hilbert series of $M$, and $(h_a(M),h_{a+1}(M),\ldots ,h_s(M))$ denote the $h$-vector of $M$, given by:
\[H_M(t)= \sum_{i\in\Z} (\dim_{\kk}M_i)t^i = \frac{h_at^a+h_{a+1}t^{a+1}+ \cdots + h_st^s}{(1-t)^d},  \]
We also write $p_M(t)$ for the numerator and $c_r(M)$ for the quantity $h_r(M) +h_{r+1}(M)+\dots + h_s(M)$. Note that  $c_r(M)= e(M)-\sum_{i<r} h_i(M)$,where $e(M)$ denotes the Hilbert-Samuel multiplicity.
\item Given a finitely generated $S$-module $M$ its Castelnuovo-Mumford regularity is $\reg(M)=\sup\{i+j:H^i_{\mm}(M)_j\neq 0\}$. In particular the regularity of the zero module is $-\infty$.
\end{enumerate}

\end{setup}

Let us recall the following result of Murai and Terai (\cite[Theorem 1.4]{MT}):
\begin{theorem}\label{t:MT}
If $\reg(\Ext_S^{n-i}(R,\omega_S))\leq i-r$ $ \forall \ i=0,\ldots ,d-1$, then $h_i(R)\geq 0$ for $i\leq r$.
\end{theorem}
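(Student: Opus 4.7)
The plan is to induct on $d = \dim R$, after harmlessly replacing $\kk$ by an infinite extension if necessary (Hilbert series and the regularities $\reg \Ext_S^{n-i}(R,\omega_S)$ are preserved under faithfully flat base change). If $d = 0$, then $R$ is Artinian and $h_i(R) = \dim_\kk R_i \geq 0$ for every $i$, so there is nothing to prove.

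Assume $d \geq 1$ and fix a sufficiently generic linear form $\ell \in S_1$; set $\bar R = R/\ell R$ and $K = (0 :_R \ell)$. The argument then rests on two pillars. \emph{Controlling $K$:} the case $i=0$ of the hypothesis reads $\reg \Ext_S^n(R, \omega_S) \leq -r$, which by graded local duality over $S$ translates into $H^0_\mm(R)_j = 0$ for $j < r$. Because $\ell$ avoids every minimal prime of $R/H^0_\mm(R)$, we have $K \subseteq H^0_\mm(R)$, hence $K_j = 0$ for $j < r$. \emph{Descending the hypothesis:} the quotient $\bar R$ again satisfies $\reg \Ext_S^{n-i}(\bar R, \omega_S) \leq i - r$ for $i = 0, \ldots, d-2$. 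This is the preservation of the property $(MT_r)$ under generic hyperplane section, advertised as a self-contained result in the introduction, which I would invoke rather than reprove at this point.

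With these in place, the conclusion reduces to a short Hilbert-series calculation. The four-term exact sequence $0 \to K(-1) \to R(-1) \xrightarrow{\ell} R \to \bar R \to 0$ yields
\[
H_{\bar R}(t) \;=\; (1-t)\, H_R(t) \;+\; t\, H_K(t),
\]
and multiplying through by $(1-t)^{d-1}$ and then reading off the coefficient of $t^i$ produces
\[
h_i(\bar R) - h_i(R) \;=\; \sum_{j=0}^{i-1} (-1)^{i-1-j} \binom{d-1}{i-1-j} \dim_\kk K_j.
\]
For $i \leq r$ every index in this sum satisfies $j \leq i - 1 < r$, so $K_j = 0$ by the first pillar, and the sum vanishes. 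Hence $h_i(R) = h_i(\bar R)$ for $i \leq r$, and the inductive hypothesis applied to $\bar R$ (available thanks to the second pillar) furnishes $h_i(\bar R) \geq 0$.

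The main obstacle is the second pillar. A direct long-exact-sequence argument controls only the end (top nonzero degree) of each $H^i_\mm(\bar R)$, whereas the hypothesis is a bound on the full Castelnuovo-Mumford regularity of its Matlis dual. Pushing the bound through the short exact sequences
\[
0 \to H^i_\mm(R)/\ell H^i_\mm(R) \to H^i_\mm(\bar R) \to \bigl(0 :_{H^{i+1}_\mm(R)} \ell\bigr)(-1) \to 0
\]
coming from the hyperplane-section long exact sequence demands a careful comparison, since regularity is only sub-additive on extensions in general; this is what forces the preservation statement to be proved as a separate lemma rather than folded into the induction.
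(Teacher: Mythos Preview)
Your proof is correct and follows the same strategy as the paper's own argument (Theorem~\ref{mtr3}): induct on $d$ by cutting with a generic linear form, invoke the preservation of $(MT_r)$ under generic hyperplane section (Proposition~\ref{mtr1}) as a separate lemma, and track the $h$-vector across the cut (Proposition~\ref{mtr2}). Your four-term-exact-sequence computation through $K=(0:_R\ell)$ is marginally more direct than the paper's detour via $M'=R/H^0_\m(R)$---it even yields $h_r(R)=h_r(\bar R)$ rather than merely $\geq$---but the two arguments are otherwise the same.
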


Motivated by this result, we make the following definition.
\begin{definition}\label{defMT}
Consider a finitely generated graded module $M$ over $S$ of dimension $d$. We say that $M$  satisfies condition $(MT_r)$ if  $\reg(\Ext_S^{n-i}(M,\omega_S))\leq i-r$ $ \forall \ i=0,\ldots ,d-1$.
\end{definition}

The analysis of this notion led us to a more direct proof of Theorem \ref{t:MT}. We first note a few preparatory results.
The following facts are trivial consequences of graded duality:
\begin{proposition}\label{easy}
\begin{enumerate}
\item  The condition $(MT_r)$ depends only on $R$ and does not depend on the presentation $R=S/I$ (we won't need this).
\item If $M$ is Cohen-Macaulay, then it is $(MT_r)$ for any $r$.
\item If $N = H^i_{\m}(M)$ has finite length, then $\reg(\Ext_S^{n-i}(M,\omega_S))\leq i-r$ if and only if $N_{<r-i} =0$.
 \end{enumerate}

\end{proposition}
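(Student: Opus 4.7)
The plan is to derive all three items from a single application of graded local duality over the Gorenstein polynomial ring $S$. With $\omega_S = S(-n)$, local duality furnishes a natural isomorphism of graded $R$-modules
\[
\Ext_S^{n-i}(M,\omega_S) \;\cong\; \Hom_\kk\bigl(H^i_\m(M),\kk\bigr)
\]
whose degree-$j$ piece is $\Hom_\kk(H^i_\m(M)_{-j},\kk)$. I would establish this identification once and then harvest each of the three statements. For (1), the right-hand side depends only on the local cohomology of $M$ at the graded maximal ideal of $R$, which is intrinsic to the $R$-module $M$; comparing any two polynomial presentations $R = S/I = S'/I'$ then yields canonical $R$-linear isomorphisms between the two candidate Ext modules, so their Castelnuovo--Mumford regularities agree and $(MT_r)$ is intrinsic.

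For (2), the Cohen--Macaulay hypothesis gives $H^i_\m(M) = 0$ for $i < d = \dim M$, so the duality isomorphism forces $\Ext_S^{n-i}(M,\omega_S) = 0$ for $0 \le i \le d-1$; the regularity of the zero module is $-\infty$, and the bound $\le i-r$ holds trivially for every $r$. For (3), if $N := H^i_\m(M)$ has finite length then so does $E := \Ext_S^{n-i}(M,\omega_S)$, and any finite-length graded module satisfies $\reg(E) = \max\{j : E_j \neq 0\}$ (only $H^0_\m(E) = E$ contributes to the regularity formula, since the higher local cohomology of a finite-length module vanishes). The bound $\reg(E) \le i-r$ thus reads $E_j = 0$ for all $j > i-r$, which under the degree-reversal built into the duality becomes $N_{j'} = 0$ for all $j' < r-i$, i.e.\ $N_{<r-i} = 0$, as desired.

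The only real bookkeeping hurdle is pinning down the correct graded shift in local duality, and this amounts to fixing the convention $\omega_S = S(-n)$ and the standard grading on $\Hom_\kk(-,\kk)$. Beyond that I do not anticipate any substantive obstacle, which is consistent with the authors' description of these items as \emph{trivial consequences of graded duality} and their explicit remark that part (1) will not actually be needed in the sequel.
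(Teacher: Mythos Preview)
Your proposal is correct and follows precisely the route the paper indicates: the authors supply no argument beyond the phrase ``trivial consequences of graded duality,'' and your spelling-out of local duality $\Ext_S^{n-i}(M,\omega_S)\cong \Hom_\kk(H^i_\m(M),\kk)$ together with the degree-reversal $E_j\cong (N_{-j})^\vee$ is exactly the intended computation. The only point worth making explicit in (1) is that Castelnuovo--Mumford regularity, being computed via local cohomology at the graded maximal ideal, is itself independent of the ambient polynomial ring---you implicitly use this when passing from the intrinsic $R$-module isomorphism to equality of regularities, and it holds for the same reason (local cohomology depends only on the radical of the support ideal).
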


\begin{proposition}\label{linear}
Suppose $\kk$ is infinite. Let $M$ be a finitely generated graded $S$-module. For a generic linear form $l$:
$$ \reg(M) \geq \max\{\reg{0:_Ml}, \reg M/lM\}$$
\end{proposition}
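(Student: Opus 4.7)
The plan is to prove the two inequalities $\reg M \geq \reg(0:_M l)$ and $\reg M \geq \reg(M/lM)$ separately, after reducing to a cleaner setting. Let $\bar M := M/H^0_{\m}(M)$. Since $\bar M$ has no $\m$-primary associated primes, a generic linear form $l \in S_1$ avoids every associated prime of $\bar M$, and hence is a non-zerodivisor on $\bar M$; this is the only genericity I will use.

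For the first inequality: if $m \in M$ satisfies $lm = 0$, then the image of $m$ in $\bar M$ is killed by the non-zerodivisor $l$ and hence vanishes, so $m \in H^0_{\m}(M)$. Thus $(0:_M l)$ is a graded submodule of the finite length module $H^0_{\m}(M)$, and therefore $\reg(0:_M l) \leq \reg H^0_{\m}(M) = a_0(M) \leq \reg M$.

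For the second inequality, first observe that $H^0_{\m}(\bar M) = 0$ and $H^i_{\m}(\bar M) = H^i_{\m}(M)$ for $i \geq 1$, so $\reg \bar M \leq \reg M$. The short exact sequence $0 \to \bar M(-1) \xrightarrow{l} \bar M \to \bar M/l\bar M \to 0$ and its associated long exact sequence in local cohomology yield, in each internal degree $j$, the exact piece $H^i_{\m}(\bar M)_j \to H^i_{\m}(\bar M/l\bar M)_j \to H^{i+1}_{\m}(\bar M)_{j-1}$. Hence $a_i(\bar M/l\bar M) \leq \max\{a_i(\bar M),\, a_{i+1}(\bar M) + 1\}$, which gives $\reg \bar M/l\bar M \leq \reg \bar M \leq \reg M$. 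To transfer back to $M/lM$, observe that $M/lM$ surjects onto $\bar M/l\bar M$ with kernel $H^0_{\m}(M)/(lM \cap H^0_{\m}(M))$, a quotient of $H^0_{\m}(M)$ and hence of regularity at most $\reg M$. Applying the standard inequality $\reg B \leq \max\{\reg A, \reg C\}$ to the resulting short exact sequence completes the argument.

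The main technical ingredient is the local cohomology long exact sequence for multiplication by $l$ on $\bar M$; everything else is bookkeeping between $M$, $\bar M$, and $H^0_{\m}(M)$. The only subtle point I expect to be careful about is verifying that the generic choice of $l$ really forces $(0:_M l) \subseteq H^0_{\m}(M)$, which is what decouples the finite-length obstructions from the torsion-free part and makes the two inequalities fall out cleanly from standard facts.
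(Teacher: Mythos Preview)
Your proof is correct and follows essentially the same route as the paper's: reduce to $\bar M = M/H^0_\m(M)$ where a generic $l$ is regular, observe $(0:_M l)\subseteq H^0_\m(M)$ for the first bound, and combine $\reg(\bar M/l\bar M)\leq \reg \bar M$ with the short exact sequence $0\to N/lN\to M/lM\to \bar M/l\bar M\to 0$ for the second. The only cosmetic difference is that the paper invokes $\reg(M'/lM')=\reg M'$ as a known fact while you spell it out via the local cohomology long exact sequence, and the paper identifies the kernel directly as $N/lN$ (using that $l$ is regular on $\bar M$ forces $\Tor_1^S(\bar M,S/l)=0$) whereas you only use that it is a quotient of $N$.
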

\begin{proof}
This is well-known and easy, but we give a proof for convenience. Let $N=H_\m^0(M)$. Since $\kk$ is infinite, for a general linear form $l$, we have $l\notin P$ for every $P\in \text{Ass}(M)\backslash\{\m\}$. In particular, we know that $0:_Ml$ has finite length and thus $0:_Ml \subseteq N$, so it's regularity is at most $\reg(N)\leq \reg(M)$.
Consider the short exact sequence $0\to N\to M\to M'\to 0$. We can assume $l$ is regular on $M'$, which leads to $0\to N/lN\to M/lM\to M'/lM'\to 0$. Since $\reg(N/lN)\leq \reg(N)$ and $\reg(M'/lM') = \reg(M') \leq \reg (M)$, the result follows.
\end{proof}

\begin{proposition}\label{mtr1}
Suppose $\kk$ is infinite. Let $M$ be a finitely generated graded $S$-module. Assume that  $M$ is $(MT_r)$, then so are the following modules:
$M'= M/H^0_{\m}(M), M'/lM', M/lM$,  for a generic linear form $l$.
\end{proposition}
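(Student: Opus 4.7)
The plan is to handle the three modules in the order $M' = M/H^0_\m(M)$, then $M'/lM'$, then $M/lM$, since each argument uses the previous one. Throughout, local duality identifies $(MT_r)$ with regularity bounds on $\Ext^{n-i}_S(-,\omega_S)$, and I repeatedly use that $\Ext^j_S(N,\omega_S)=0$ for $j\neq n$ whenever $N$ has finite length.

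For $M'=M/H^0_\m(M)$: local cohomology gives $H^i_\m(M')=H^i_\m(M)$ for $i\geq 1$ and $H^0_\m(M')=0$, so via duality the relevant Ext modules are unchanged and $\Ext^n_S(M',\omega_S)=0$. Hence $(MT_r)$ for $M'$ follows immediately from the hypothesis on $M$.

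For $M'/lM'$: pick a generic linear form $l$, simultaneously a nonzerodivisor on $M'$ and generic in the sense of Proposition~\ref{linear} with respect to each of the finitely many nonzero modules $F_k := \Ext^k_S(M',\omega_S)$, which is possible since $\kk$ is infinite. Applying $\Hom(-,\omega_S)$ to $0 \to M'(-1) \xrightarrow{l} M' \to M'/lM' \to 0$ and extracting the relevant piece of the long exact sequence yields the short exact sequence
\[0 \to F_{n-i-1}(1)/lF_{n-i-1} \to \Ext^{n-i}_S(M'/lM',\omega_S) \to (0:_{F_{n-i}} l) \to 0.\]
The $(MT_r)$ bound on $M'$ gives $\reg(F_{n-i-1}(1)/lF_{n-i-1}) \leq \reg F_{n-i-1} - 1 \leq i - r$, and Proposition~\ref{linear} gives $\reg(0:_{F_{n-i}} l) \leq \reg F_{n-i} \leq i - r$. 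Subadditivity of regularity across the short exact sequence then yields $(MT_r)$ for $M'/lM'$.

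For $M/lM$: since $l$ is a nonzerodivisor on $M'$, tensoring $0 \to H^0_\m(M) \to M \to M' \to 0$ with $S/(l)$ produces
\[0 \to H^0_\m(M)/lH^0_\m(M) \to M/lM \to M'/lM' \to 0.\]
The leftmost term has finite length, so $\Ext^{n-i}_S(M/lM,\omega_S) \cong \Ext^{n-i}_S(M'/lM',\omega_S)$ for $i \geq 1$, settling those indices via the previous paragraph. For $i = 0$, Proposition~\ref{easy}(3) applied to $M$ at $i=0$ forces $H^0_\m(M)_{<r}=0$, whence $(H^0_\m(M)/lH^0_\m(M))_{<r}=0$ and its Matlis dual has regularity $\leq -r$; combined with the bound on $\Ext^n_S(M'/lM',\omega_S)$ already established, this finishes the argument. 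The main subtle point is the middle paragraph: correctly tracking the degree shift in the Ext long exact sequence, and pinning down a single generic $l$ that lets one invoke Proposition~\ref{linear} uniformly across all the $F_k$.
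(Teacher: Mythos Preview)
Your proof is correct and follows essentially the same route as the paper: the same three-step order, the same short exact sequence $0 \to K_{i+1}(M')(1)/lK_{i+1}(M') \to K_i(M'/lM') \to (0:_{K_i(M')}l) \to 0$ coming from $0 \to M'(-1) \to M' \to M'/lM' \to 0$, and the same use of Proposition~\ref{linear} to bound both flanking terms. Your treatment of the $i=0$ case for $M/lM$ via Proposition~\ref{easy}(3) is a bit more explicit than the paper's terse ``another application of the long exact sequence,'' but the content is identical.
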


\begin{proof}
Let $K_i(M)$ denote the module $\Ext_S^{n-i}(M,\omega_S)$. The short exact sequence $0 \to H_{\m}^0(M) \to M \to M' \to 0$ tells us that $K_0(M')=0$ and $K_i(M') = K_i(M)$ for $i\geq 1$. So $M'$ is $(MT_r)$.
For the rest, we assume $d=\dim M\geq 1$. As $l$ is generic, it is regular on $M'$, so from the short exact sequence $0 \to M'(-1) \to M' \to M'/lM' \to 0$ we obtained for each $i\leq d-1=\dim M'/lM'$:
$$0 \to K_{i+1}(M'(-1))/l K_{i+1}(M'(-1)) \to K_i(M'/lM') \to 0:_{K_i(M')}l \to 0    $$
The bound on regularity of $K_i(M'/lM')$ follows from the property $(MT_r)$ for $M'$ and Proposition \ref{linear}.
For the module $M/lM$, note that we have the short exact sequence $0\to N/lN\to M/lM\to M'/lM'\to 0$ where $N=H_\m^0(M)$. Another application of the long exact sequence of $\Ext$ and the fact that $M'/lM'$ is $(MT_r)$ proves what we need.
\end{proof}

The next proposition tracks the behavior of $h$-vector  modulo a generic linear form for a module satisfying $(MT_r)$. Let $c_r(M)$ be the quantity: $h_r(M)+ h_{r+1}(M)+\cdots = e(M)-\sum_{i<r}h_i(M)$.
\begin{proposition}\label{mtr2}
Suppose $\kk$ is infinite. Let $M$ be a finitely generated graded $S$-module of dimension $d\geq 1$. Let $l$ be a generic linear form. Let $N=H_\m^0(M)$ and $M'=M/N$.  Assume that $N_{<r}=0$ (for example, if $M$ is $(MT_r)$). Then:
\begin{enumerate}
\item $h_i(M) =h_i(M/lM)$ for $i<r$.
\item $h_r(M) \geq h_r(M/lM)$.
\item $c_r(M) =c_r(M'/lM')$.
\item $c_r(M) =   c_r(M/lM)$ if $d>1$.
\end{enumerate}
\end{proposition}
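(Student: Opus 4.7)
The plan is to translate the four claims into Hilbert-series identities via two short exact sequences, and then extract (1)--(4) by comparing coefficients and by specializing at $t=1$.

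First I would set up the two sequences. Since $l$ is generic and $\depth M' \geq 1$, $l$ is regular on $M'$, giving $0 \to M'(-1) \xrightarrow{l} M' \to M'/lM' \to 0$. Applying the snake lemma to multiplication by $l$ on $0 \to N \to M \to M' \to 0$, and using that $0:_{M'} l = 0$ together with $0:_M l \subseteq N$ (the latter by genericity, as already exploited in the proof of Proposition~\ref{linear}), one obtains the short exact sequence $0 \to N/lN \to M/lM \to M'/lM' \to 0$. These yield the Hilbert-series identities $H_{M'/lM'}(t) = (1-t) H_{M'}(t)$, $H_{M/lM}(t) = H_{N/lN}(t) + H_{M'/lM'}(t)$, and $H_M(t) = H_N(t) + H_{M'}(t)$.

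Clearing denominators, and using $\dim M' = d$, $\dim M'/lM' = \dim M/lM = d-1$, and $p_{M'/lM'}(t) = p_{M'}(t)$, I would record the two polynomial identities
\[ p_M(t) = p_{M'}(t) + (1-t)^d H_N(t), \qquad p_{M/lM}(t) = p_{M'}(t) + (1-t)^{d-1} H_{N/lN}(t). \]
The hypothesis $N_{<r} = 0$ forces $t^r$ to divide both $H_N(t)$ and $H_{N/lN}(t)$, so each error term contributes $0$ in every degree $<r$ and contributes $\dim_\kk N_r$ respectively $\dim_\kk (N/lN)_r$ in degree $r$. Combined with the natural surjection $N_r \twoheadrightarrow (N/lN)_r$, comparing coefficients of the two identities yields (1) and (2) simultaneously.

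For (3) and (4) I would specialize at $t = 1$. Because $d \geq 1$, the first error term vanishes, yielding $e(M) = p_M(1) = p_{M'}(1) = e(M'/lM')$; combined with (1) and the formula $c_r = e - \sum_{i<r} h_i$, this gives (3). The same manipulation applied to the second identity requires $d - 1 \geq 1$ so that $(1-t)^{d-1} H_{N/lN}(t)$ still vanishes at $t=1$, which is precisely the hypothesis $d > 1$ imposed in (4). Every step is essentially bookkeeping, and the only real subtlety is this last point: when $d = 1$ the finite-length module $N/lN$ genuinely contributes to $e(M/lM) = \dim_\kk(M/lM)$, so the error term survives evaluation at $t=1$ and (4) would fail.
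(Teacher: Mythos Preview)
Your proof is correct and follows essentially the same approach as the paper: both arguments derive the polynomial identities $p_M(t)=p_{M'}(t)+(1-t)^d H_N(t)$ and $p_{M/lM}(t)=p_{M'}(t)+(1-t)^{d-1}H_{N/lN}(t)$ from the short exact sequences $0\to N\to M\to M'\to 0$ and $0\to N/lN\to M/lM\to M'/lM'\to 0$, compare coefficients for (1)--(2), and use that the error terms vanish at $t=1$ for (3)--(4). Your write-up is slightly more explicit than the paper's about the specialization at $t=1$ and the role of the hypothesis $d>1$, but the substance is the same.
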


\begin{proof}
First of all $M$ is $(MT_r)$ implies $N_{<r}=0$, see Proposition \ref{easy}. Since $p_M(t) = p_{N}(t)(1-t)^d + p_{M'}(t)$, we have $h_i(M) = h_i(M')$ for $i<r$ and $h_r(M) = h_r(M')+h_r(N)$, as well as $c_r(M)=c_r(M')$. As $l$ is generic, it is regular on $M'$ and so $h_i(M'/lM') = h_i(M')$ for all $i$. We also have the short exact sequence $0\to N/lN\to M/lM\to M'/lM'\to 0$, which gives:
\begin{itemize}
\item $h_i(M/lM) = h_i(M'/lM') = h_i(M') = h_i(M)$ for $i<r$.
\item $h_r(M/lM) =h_r(M'/lM') +h_r(N/lN) \leq h_r(M)$.
\item $c_r(M)=c_r(M') = c_r(M'/lM')$.
\item $c_r(M/lM) = c_r(M'/lM') = c_r(M)$ if $d>1$. \qedhere
\end{itemize}
\end{proof}

We can now give a direct and simple proof of Theorem \ref{t:MT}, extended to the module case.

\begin{theorem}\label{mtr3}
Let $M$ be a module satisfying $(MT_r)$. Then $h_i(M)\geq 0$ for $i\leq r$ and also $c_r(M)\geq 0$.
\end{theorem}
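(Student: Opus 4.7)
The proposal is to induct on $d = \dim M$. First I would reduce to the case where $\kk$ is infinite by a flat base change $\kk \to \kk'$: the Hilbert series is preserved, and since $\Ext$ and Castelnuovo-Mumford regularity commute with flat extensions, the condition $(MT_r)$ is preserved as well; the $h$-vector and $c_r$ depend only on the Hilbert series, so the conclusions transfer back.

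For the base case $d = 0$, the module $M$ has finite length, so $H_M(t) = p_M(t) = \sum_i h_i(M)\,t^i$ and each coefficient $h_i(M) = \dim_\kk M_i \geq 0$. Consequently $c_r(M) = \sum_{i \geq r} h_i(M) \geq 0$, and the hypothesis $(MT_r)$ is vacuous in this case.

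For the inductive step $d \geq 1$, pick a generic linear form $l$. By Proposition \ref{mtr1}, both $M/lM$ and $M'/lM'$ (where $M' = M/H^0_\m(M)$) satisfy $(MT_r)$, and both have dimension $d-1$. Apply the inductive hypothesis. Proposition \ref{mtr2}(1)--(2) then gives $h_i(M) = h_i(M/lM) \geq 0$ for $i < r$ and $h_r(M) \geq h_r(M/lM) \geq 0$. For $c_r$, split into two cases: if $d > 1$, Proposition \ref{mtr2}(4) gives $c_r(M) = c_r(M/lM) \geq 0$; if $d = 1$, part (3) gives $c_r(M) = c_r(M'/lM')$, and since $M'/lM'$ has dimension $0$, the base case applied to $M'/lM'$ yields $c_r(M'/lM') \geq 0$.

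The only delicate point is choosing the right quotient when $d = 1$: naively using $M/lM$ does not give $c_r(M) = c_r(M/lM)$ (since the torsion $N/lN$ can contribute nonzero degrees $\geq r$), which is why Proposition \ref{mtr2}(3) is stated with $M'/lM'$. Once this bookkeeping is handled, the induction closes cleanly; there is no serious obstacle beyond assembling the pieces from the preceding propositions.
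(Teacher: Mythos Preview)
Your proposal is correct and follows essentially the same approach as the paper's proof: reduce to an infinite field, induct on $\dim M$ by cutting down with a generic linear form, invoke Propositions~\ref{mtr1} and~\ref{mtr2} to control the invariants (with the caveat that at the step $d=1$ one uses $M'/lM'$ for $c_r$), and observe that the Artinian base case is trivial. The paper presents this more tersely, but the structure and all the key points---including the subtlety at $d=1$---are identical.
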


\begin{proof}
We can extend $\kk$ if necessary and hence assume it is infinite. Since $M$ is $(MT_r)$, we have $N_{<r}=0$ where $N=H_\m^0(M)$, see Proposition \ref{easy}. By Propositions \ref{mtr1} and \ref{mtr2}, we can cut down by a generic linear system of parameters of length $d$ without increasing any of the relevant invariants (for the statement about $c_r(M)$, at the step $d=1$ we need to use $M'/lM'$ and not $M/lM$). But for Artinian modules all inequalities are trivial.\end{proof}

\begin{proposition}\label{mtr4}
If $M$ is $(MT_r)$ and either:
\begin{enumerate}
\item $\reg(M)<r$, or
\item $h_i(M)=0$ for some $i\leq r$ and $M$ is generated in degree $0$.
\end{enumerate}
Then $M$ is Cohen-Macaulay.
\end{proposition}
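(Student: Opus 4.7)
For \textbf{case (1)}, translate $\reg M < r$ via the local cohomology formula $\reg M = \max_i \{\mathrm{end}(H_\m^i(M)) + i\}$ into the vanishing $H_\m^i(M)_j = 0$ for all $j \geq r - i$ and every $i$. Graded Matlis duality then gives $\Ext_S^{n-i}(M, \omega_S)_k = 0$ for all $k \leq i - r$. On the other hand, $(MT_r)$ forces $\Ext_S^{n-i}(M, \omega_S)$ to be generated in degrees $\leq i - r$ for each $i < d$, and a finitely generated graded module whose minimal generators all lie in degrees where the module vanishes must itself be zero. Hence $H_\m^i(M) = 0$ for $i < d$, and $M$ is Cohen-Macaulay.

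For \textbf{case (2)}, extending $\kk$ if needed, we may assume it is infinite and induct on $d = \dim M$; the base $d = 0$ is trivial. For $d \geq 1$, pick a generic linear form $l$. By Proposition \ref{mtr1}, $M/lM$ is $(MT_r)$, and Proposition \ref{mtr2}(1)(2) combined with Theorem \ref{mtr3} gives $h_i(M/lM) = 0$ for the same $i \leq r$ (equality for $i<r$; for $i=r$ one has $h_r(M/lM)\leq h_r(M)=0$ together with $h_r(M/lM)\geq 0$). Since $M/lM$ is generated in degree $0$ as a quotient of $M$, the induction hypothesis gives that $M/lM$ is Cohen-Macaulay.

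It remains to prove $N := H_\m^0(M) = 0$, for then $l$ is a nonzerodivisor on $M$ and the standard lifting lemma yields that $M$ is Cohen-Macaulay. From the four-term exact sequence $0 \to (0 :_M l)(-1) \to M(-1) \xrightarrow{l} M \to M/lM \to 0$, with $K := 0 :_M l \subseteq N$ by genericity, chasing local cohomology through the two induced short exact sequences gives an embedding $N/lN \hookrightarrow H_\m^0(M/lM)$, where we use $lM \cap N = lN$ (which follows because $l$ is a nonzerodivisor on $M/N$). By Proposition \ref{easy}(3), $N/lN$ is concentrated in degrees $\geq r$. The main obstacle is controlling the target: for $d \geq 2$, $H_\m^0(M/lM) = 0$ since $M/lM$ is Cohen-Macaulay of positive dimension; for $d = 1$, $M/lM$ is Artinian generated in degree $0$ with $h_i(M/lM) = 0$, which forces $(M/lM)_{\geq r} = 0$. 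Either way, the image of $N/lN$ vanishes in degrees $\geq r$, so $N/lN = 0$, and graded Nakayama gives $N = 0$.
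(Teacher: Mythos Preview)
Your proof is correct, and both parts take a somewhat different route from the paper.

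For case (1), the paper argues by induction on $d$: it kills $N=H_\m^0(M)$ by combining $N_{<r}=0$ (from $(MT_r)$) with $\reg N\leq\reg M<r$, then descends to $M/lM$ via Propositions~\ref{linear} and~\ref{mtr1}. Your argument is direct and avoids induction altogether: the vanishing $\Ext_S^{n-i}(M,\omega_S)_{\leq i-r}=0$ coming from $\reg M<r$ via local duality, together with the generator-degree bound $\leq i-r$ coming from $(MT_r)$, annihilates each deficiency module outright. This is cleaner.

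For case (2), both arguments induct on $d$, but the mechanism for showing $N=0$ differs. The paper first passes to $M'=M/N$, proves (via $M'/lM'$ and induction) that $M'$ is Cohen--Macaulay with $\reg M'\leq i-1$, and then uses a presentation $M=F/P$, $N=Q/P$, $M'=F/Q$: since $Q$ is generated in degrees $\leq i$ while $N_{\leq i}=0$, one gets $Q\subseteq P$, hence $N=0$. You instead apply induction to $M/lM$ directly and embed $N/lN\hookrightarrow H_\m^0(M/lM)$; the target vanishes in degrees $\geq r$ (trivially when $d\geq 2$, and via the generated-in-degree-$0$ hypothesis when $d=1$), so $N/lN=0$ and Nakayama finishes. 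Your approach trades the paper's syzygy/presentation bookkeeping for a case split at $d=1$; both are of comparable length, though yours stays closer to the local-cohomology viewpoint used elsewhere in the section.
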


\begin{proof}
We can extend $\kk$ if necessary and hence assume that it is infinite. We use induction on $d=\dim M$ and therefore assume $d\geq 1$.
Assume first that $\reg(M)<r$.  Let $N =H_\m^0(M)$, then $N_{<r}=0$. But $\reg(N)\leq \reg(M) <r$, so $N=0$. Thus $\depth M\geq 1$.
By Propositions \ref{linear} and \ref{mtr1} as well as induction hypotheses, $M/lM$ is Cohen-Macaulay for a generic $l$, which implies that $M$ is Cohen-Macaulay.

Now assume that $h_i(M)=0$ for some $i\leq r$. Again, we shall show that $N= H_\m^0(M)=0$. Let  $M'=M/N$. Then as in the proof of Proposition \ref{mtr1} and \ref{mtr2}, $M'$ is still $(MT_r)$ and $h_i(M') = h_i(M)=0$ (when $i=r$, we have $h_r(M')=h_r(M) - h_r(N) \leq h_r(M)=0$, but since $M'$ is $(MT_r)$, we also know that $h_r(M')\geq 0$ by Theorem~\ref{mtr3}, so $h_r(M')=0$). Passing to $M'/lM'$ preserves everything, so by induction $M'/lM'$ is Cohen-Macaulay, and so is $M'$. It follows that $h_j(M')=0$ for $j\geq i$ and $\reg M'\leq i-1$.

Write $M= F/P$ where $P$ is the first syzygy of $M$. Then $N=Q/P$ for $Q\subseteq F$, and $M'=F/Q$. Since $\reg(M')\leq i-1$, $Q$ is generated in degree at most $i$. But $N_{\leq i}=0$ (we already know that $N_{<r}=0$, if $i=r$ then the equality $h_r(M)= h_r(N)+h_r(M')$ forces $h_r(N)=N_r=0$). Thus $Q= Q_{\leq i} \subseteq P$, and $N=0$. The rest follows as in the first part.
\end{proof}

We next list a few examples showing the sharpness of Theorem~\ref{mtr3} and Proposition~\ref{mtr4}.

\begin{eg}
\begin{enumerate}
    \item Let $R= \kk[a^4,a^3b,ab^3,b^4]$. Then $R$ is $(MT_2)$ but not $(MT_3)$. The $h$-vector of $R$ is $(1,2,2,-1)$. In particular, $h_3(R)<0$, which shows the sharpness of Theorem~\ref{mtr3}.
  \item Let $R=\kk[x,y]/(x^3,x^2y)$. Then $R$ is $(MT_2)$ and $c_2(R)=0$ but $R$ is not Cohen-Macaulay. So the analog of Proposition~\ref{mtr4} with $c_r$ in place of $h_r$ is false. In this example, the $h$-vector is $(1,1,1,-1)$.
  \item One cannot drop the condition that $M$ is $(MT_r)$ in Proposition~\ref{mtr4}. Let $R=\kk[x,y,u,v]/(xuv,yu)$. Then the $h$-vector of $R$ is $(1,1,0,-1)$. In particular, $h_2(R)=0$ but $R$ is not Cohen-Macaulay. Of course, $R$ fails $(MT_2)$.
  \item One cannot drop the condition that $M$ is generated in degree $0$ in Proposition~\ref{mtr4}. Let $S=\kk[x]$, $M=S\oplus \kk(-3)$. Then $M$ is $(MT_2)$, $h_2(M)=0$, but $M$ is not Cohen-Macaulay.
\end{enumerate}
\end{eg}


The following connects Serre's condition $(S_i)$ to dimension of $\Ext$, it appears without proof in \cite[Proposition 3.51]{Vas}:

\begin{proposition}\label{algSl}
If $R$ is equidimensional, it satisfies $(S_r)$ if and only if
\[\dim \Ext_S^{n-i}(R,S) \leq i-r \ \ \ \forall \ i<\dim R\]
(the Krull dimension of the zero-module is $-\infty$).
\end{proposition}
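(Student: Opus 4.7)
The plan is to localize and use graded/local duality over the regular local rings $S_{\pp}$ to translate vanishing of $\Ext^{n-i}_S(R,S)_\pp$ into vanishing of local cohomology of $R_\pp$, and then recognize the Serre condition in the resulting bound.

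First I would fix a prime $\pp\subset S$ containing $I$, and set $c=\height \pp$ and $\delta=\dim S/\pp$, so $c+\delta=n$. Since $S$ is Cohen--Macaulay, hence universally catenary, $R=S/I$ is catenary; combined with equidimensionality this gives the dimension formula $\dim R_\pp = d-\delta$ for every such $\pp$. This is the only place the hypothesis is really used, and it is the step where the statement fails without equidimensionality (in general one only has $\dim R_\pp \leq d-\delta$ and the chain of equivalences below collapses at some primes).

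Next I would apply local duality. The ring $S_\pp$ is regular local of dimension $c$ with $\omega_{S_\pp}\cong S_\pp$, so
\[
\Ext^{n-i}_S(R,S)_\pp \;\cong\; \Ext^{n-i}_{S_\pp}(R_\pp, S_\pp)
\]
and by Matlis duality over $S_\pp$ this is dual to $H^{c-(n-i)}_{\pp S_\pp}(R_\pp) = H^{i-\delta}_{\pp R_\pp}(R_\pp)$. Hence
\[
\Ext^{n-i}_S(R,S)_\pp \neq 0 \iff H^{i-\delta}_{\pp R_\pp}(R_\pp)\neq 0.
\]

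Now I would rewrite Serre's condition. By definition, $R$ satisfies $(S_r)$ at $\pp$ iff $\depth R_\pp \geq \min(r, \dim R_\pp) = \min(r, d-\delta)$, which is equivalent to $H^j_{\pp R_\pp}(R_\pp)=0$ for all $j<\min(r,d-\delta)$. Substituting $j=i-\delta$ using Step two, this becomes $\Ext^{n-i}_S(R,S)_\pp=0$ for every $i$ with $i-\delta<\min(r,d-\delta)$, i.e.\ for every $i<\min(r+\delta, d)$.

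Finally I would assemble: $R$ satisfies $(S_r)$ (at all primes) iff for every prime $\pp\supset I$ and every $i<d$ one has the implication
\[
\pp \in \Supp \Ext^{n-i}_S(R,S) \;\Longrightarrow\; i \geq r+\delta, \text{ i.e., } \dim S/\pp \leq i-r.
\]
Taking the supremum over $\pp$ in the support, this is exactly the condition $\dim\Ext^{n-i}_S(R,S)\leq i-r$ for all $i<d$, with the usual convention $\dim 0=-\infty$. No real obstacle is expected beyond careful bookkeeping of the indices $c$, $\delta$, $i$, and $d$, and being explicit about where equidimensionality is invoked.
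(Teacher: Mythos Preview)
Your proposal is correct and follows essentially the same route as the paper: localize at a prime $\pp\supset I$, use local duality over the regular local ring $S_\pp$ to identify $\Ext^{n-i}_S(R,S)_\pp$ with the Matlis dual of a local cohomology module of $R_\pp$, invoke equidimensionality to compute $\dim R_\pp$, and translate the depth bound in $(S_r)$ into the dimension bound on the $\Ext$ modules. The paper separates the two implications and phrases the forward direction via the depth characterization of $\Ext$-vanishing rather than explicitly naming local duality, whereas you run both directions at once as a single chain of equivalences; but the underlying mechanism is the same.
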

\begin{proof}
Recall that $\dim R=d$ and $\height I=e$. Assume first that $R$ satisfies $(S_r)$. If $i<r\leq \depth R$, there is nothing to prove because $\Ext_S^{n-i}(R,S)=0$. Otherwise, let $\mathfrak{p}$ be a prime ideal of $S$ containing $I$ of height $h$. Notice that, because $R$ is equidimensional, $\dim R_{\pp}=h-e$. If $h<n-i+r$, then
\[\Ext_S^{n-i}(R,S)_{\mathfrak{p}}\cong \Ext_{S_{\mathfrak{p}}}^{n-i}(R_{\mathfrak{p}},S_{\mathfrak{p}})=0,\]
because $n-i=h-(h-n+i)$ and $\depth R_{\mathfrak{p}}>h-n+i$. To see it, note that $h-n+i<n-i+r-n+i=r$ and $\dim R_{\mathfrak{p}}>h-n+i$, so because $R$ satisfies $(S_r)$
\[\depth R_{\mathfrak{p}}=\min\{r,\dim R_{\mathfrak{p}}\}>h-n+i.\]
So $\mathfrak{p}\notin \mathrm{Supp} \Ext_S^{n-i}(R,S)$ whenever $\height \mathfrak{p}<n-(i-r)$, i.e. $\dim \Ext_S^{n-i}(R,S)\leq i-r$.
%
%

For the other direction, for all $h=e,\ldots ,n$ let us denote by $V_h$ the set of prime ideals of $S$ of height $h$ containing $I$. We have the following chain of equivalences:
\begin{eqnarray*}
\mbox{$R$ satisfies $(S_r)$ } & \iff \\
\depth R_{\mathfrak{p}}=\min\{r,\dim R_{\pp}\}=:b \ \ \ \forall \ h=e,\ldots ,n, \ \forall \ \pp \in V_h & \iff \\
\depth R_{\mathfrak{p}}=\min\{r,h-e\}=:b \ \ \ \forall \ h=e,\ldots ,n, \ \forall \ \pp \in V_h & \iff \\
H_{\pp S_{\pp}}^i(R_{\pp})=0 \ \ \ \forall \ h=e,\ldots ,n, \ \forall \ \pp \in V_h, \ \forall \ i<b & \iff \\
\Ext_{S_{\pp}}^{h-i}(R_{\pp},S_{\pp})=0 \ \ \ \forall \ h=e,\ldots ,n, \ \forall \ \pp \in V_h, \ \forall \ i<b & \iff \\
\dim \Ext_{S}^{h-i}(R,S)<n-h \ \ \ \forall \ h=e,\ldots ,n, \ \forall \ i<b & \\
\end{eqnarray*}
(where in the equivalence between the second and third line we used the equidimensionality of $R$). Since $i<b\leq h-e$, we have $n-h+i<n-e=d$. If $\dim \Ext_S^{n-i}(R,S) \leq i-r \ \ \ \forall \ i=0,\ldots ,d-1$, then
\[\dim \Ext_{S}^{h-i}(R,S)=\dim \Ext_{S}^{n-(n-h+i)}(R,S)\leq n-h+i-\ell< n-h \]
for all $h=e,\ldots ,n$ (because $i<b\leq r$).
\end{proof}

\begin{remark}
In Proposition \ref{algSl}, the equidimensionality of $R$ is needed only in the case `` $r=1$". Indeed, if $R$ satisfies $(S_r)$ with $r\geq 1$ then it has no embedded prime, and if $R$ satisfies $(S_r)$ with $r\geq 2$ then it is equidimensional by \cite[Remark 2.4.1]{Ha}.

On the other hand, an argument similar to  the one used to prove Proposition \ref{algSl} shows that  $\dim R/\pp=\dim R$ for all associated prime ideals if and only if $\dim \Ext_S^{n-i}(R,S) <i \ \ \ \forall \ i<\dim R$.
\end{remark}

We conclude this section with the following remark that shows that we cannot hope to answering ``always!'' to Question \ref{mainques}.


\begin{remark}\label{rem:negh2}
Let $S=\kk[x_i,y_i: i=1,\ldots ,n]$ and $I\subseteq S$ the following ideal:
\[I=(x_1,\ldots ,x_n)^2+(x_1y_1+\cdots +x_ny_n).\]
Setting $R=S/I$, we claim the following properties:
\begin{itemize}
\item[(i)] $R$ is an $n$-dimensional ring satisfying $(S_{n-1})$;
\item[(ii)] $\reg R=1$;
\item[(iii)] $R$ is not Cohen-Macaulay;
\item[(iv)] $H_R(t)= \displaystyle \frac{1+nt-t^2}{(1-t)^n}$;
\item[(v)] $H_{\mathfrak{m}}^{n-1}(R)=H_{\mathfrak{m}}^{n-1}(R)_{2-n}=\kk$, thus $R$ is Buchsbaum.
\end{itemize}
For it, let us notice that, for any term order on $S$, the set $\{x_ix_j:1\leq i \leq j\leq n\}\cup \{x_1y_1+\ldots +x_ny_n\}$ is a Gr\"obner basis by Buchberger's criterion. So we can choose a term order such that the initial ideal of $I$ is
\[J=(x_1,\cdots ,x_n)^2+(x_1y_1).\]
So $\depth R\geq \depth S/J=n-1$ and $H_R(t)=H_{S/J}(t)=H_{P/J\cap P}(t)/(1-t)^{n-1}$, where $P=\kk[x_1,\ldots ,x_n,y_1]$. Note that $\dim_{\kk} (P/J\cap P)_0=1$, $\dim_{\kk} (P/J\cap P)_1=n+1$ and $\dim_k (P/J\cap P)_d=n$ for any $d>1$. So $H_{P/J\cap P}(t)=(1+nt-t^2)/(1-t)$, yielding
\[H_R(t)= \frac{1+nt-t^2}{(1-t)^n}.\]
In particular, $R$ is not Cohen-Macaulay, therefore $\depth R=n-1$. We have $1\leq \reg R\leq \reg S/J$. Since it is a monomial ideal, it is simple to check that $J$ has linear quotients, so $\reg S/J=1$.
Finally, take a prime ideal $\pp\in\Proj S$ containing $I$ and consider the ring $S_{\pp}/IS_{\pp}$. Notice that there exists $i$ such that $y_i\notin \pp$, so
\[IS_{\pp}=(x_1,\ldots ,x_n)^2+(x_i+1/y_i\sum_{j\neq i}x_jy_j). \ \]
Therefore, by denoting $S'=\kk[x_j,y_k:j\neq i]$ and $\pp'=\pp\cap S'$, we get
\[\frac{S_{\pp}}{IS_{\pp}}\cong \frac{S'_{\pp'}}{(x_j :j\neq i)^2},\] that is certainly Cohen-Macaulay.

For the last point, notice that we have a short exact sequence:
$$
0\to \frac{S}{(x_1,\dots,x_n)}(-2) \xrightarrow{\cdot (x_1y_1+\ldots +x_ny_n)} \frac{S}{(x_1,\ldots ,x_n)^2} \to \frac{S}{I}\to 0
$$
which induces
$$
0\to H_{\mathfrak{m}}^{n-1}(S/I)\to H_{\mathfrak{m}}^{n}\left(\frac{S}{(x_1,\dots,x_n)}(-2)\right) \xrightarrow{\cdot (x_1y_1+\ldots +x_ny_n)} H_{\mathfrak{m}}^{n}\left(\frac{S}{(x_1,\ldots ,x_n)^2}\right)
$$
So $H_{\mathfrak{m}}^{n-1}(S/I)$ is the kernel, but it is easy to compute that, up to scalar, the only element in the kernel is the element $\frac{1}{y_1\cdots y_n}$, which has degree $2-n$ in $H_{\mathfrak{m}}^{n}(\frac{S}{(x_1,\dots,x_n)}(-2))$. Thus we are done.
\end{remark}

\section{Regularity bounds in characteristic 0}\label{main}

In this section we establish the key vanishing result in the characteristic $0$ that is needed in Theorem \ref{main0} stated in the introduction. As explained in Section \ref{prelim}, we need to establish condition $(MT_r)$ (Definition \ref{defMT}) for nice singularities satisfying Serre's condition $(S_r)$. The following gives a bound on regularity of $\Ext_S^{i}(R, \omega_S)$ which is crucial. We begin by recalling the definition of Du Bois singularities.

Suppose that $X$ is a reduced scheme essentially of finite type over a field of characteristic $0$.  Associated to $X$ is an object $\underline{\Omega}_X^0 \in D^b(X)$ with a map $O_X \to \underline{\Omega}_X^0$. Following \cite[2.1]{Sch}, if $X\subseteq Y$ is an embedding with $Y$ smooth (which is always the case if $X$ is affine or projective), then $\underline{\Omega}_X^0 \cong R\pi_*O_E$ where $E$ is the reduced pre-image of $X$ in a strong log resolution of the pair $(Y, X)$.\footnote{This means $\pi$ is a log resolution of $(Y, X)$ that is an isomorphism outside of $X$. We note that this is not the original definition of $\underline{\Omega}_X^0$ but it is equivalent by the main result of \cite{Sch}.} $X$ is Du Bois if $O_X\to \underline{\Omega}_X^0$ is an isomorphism.

\begin{proposition}\label{p:koda}
If $X = \Proj R$ is Du Bois, then  $$H_\m^j(\Ext_S^{n-i}(R, \omega_S))_{>0}=0$$ for all $i, j$. In particular,
\[\reg \Ext_S^{i}(R, \omega_S)\leq \dim \Ext_S^{i}(R, \omega_S) \ \ \ \forall \ i.\]
\end{proposition}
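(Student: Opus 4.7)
The plan is to translate the local cohomology vanishing into a sheaf-cohomological statement on $X = \Proj R$, and then apply a Kodaira-type vanishing theorem for Du Bois singularities (due to Kov\'acs).

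First, I would pass from local cohomology to sheaf cohomology. Writing $K^i := \Ext_S^{n-i}(R, \omega_S)$, the sheafification $\widetilde{K^i}$ on $\mathbb{P}^{n-1}$ is $\mathcal{E}xt_{\mathbb{P}^{n-1}}^{n-i}(\mathcal{O}_X, \omega_{\mathbb{P}^{n-1}})$; these are, up to an index shift, the cohomology sheaves $\mathcal{H}^{1-i}(\omega_X^\bullet)$ of the Grothendieck dualizing complex of $X$. The standard Serre correspondence
\[
H_\m^j(K^i)_k = H^{j-1}(X, \widetilde{K^i}(k)) \quad \text{for } j \geq 2,
\]
together with the usual four-term exact sequence controlling $H_\m^0$ and $H_\m^1$ in positive degree, reduces the claim $H_\m^j(K^i)_{>0} = 0$ to the vanishing of $H^{j-1}(X, \widetilde{K^i}(k))$ for $k > 0$.

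Next, I would invoke Kov\'acs's Kodaira-type vanishing for Du Bois: for projective Du Bois $X$ and ample $L$,
\[
H^i(X, L^{-k}) = 0 \quad \text{for } i < \dim X, \ k > 0.
\]
By Grothendieck duality on $\mathbb{P}^{n-1}$ applied to the embedding $X \hookrightarrow \mathbb{P}^{n-1}$, this is equivalent to the hypercohomology vanishing
\[
\mathbb{H}^j(X, \omega_X^\bullet(k)) = 0 \quad \text{for } j > -\dim X, \ k > 0.
\]

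The main obstacle is bridging the gap from this hypercohomology vanishing to the sheaf-cohomology vanishing of each individual component $\mathcal{H}^q(\omega_X^\bullet)(k)$. To handle this, I would use the hypercohomology spectral sequence
\[
E_2^{p,q} = H^p(X, \mathcal{H}^q(\omega_X^\bullet)(k)) \Rightarrow \mathbb{H}^{p+q}(X, \omega_X^\bullet(k)),
\]
combined either with a refinement of Kov\'acs's theorem applicable sheaf-by-sheaf to the components of $\omega_X^\bullet$ (via duality with the Du Bois complex $\underline{\Omega}_X^\bullet$), or with an induction on $\dim X$ exploiting Bertini-type preservation of the Du Bois property under a generic hyperplane section; the inductive step would use the long exact sequences of $\Ext$ coming from $0 \to R(-1) \to R \to R/\ell R \to 0$ to reduce to the case of smaller dimension. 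The regularity bound stated in the conclusion is then immediate: if $H_\m^j(M)_{>0} = 0$ for all $j$, then whenever $H_\m^j(M)\neq 0$ its top nonvanishing degree is at most $0$, hence $\reg M \leq \max\{j : H_\m^j(M)\neq 0\} \leq \dim M$.
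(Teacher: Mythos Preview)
Your outline handles the case $j\geq 2$ in essentially the same way as the paper: one passes to sheaf cohomology on $X$ and invokes the sheaf-by-sheaf Kodaira vanishing for the cohomology sheaves of the dualizing complex (this is Ambro's theorem, used in the paper as Theorem~\ref{theorem--Ambro's vanishing}; since $X$ is Du Bois, $\underline{\omega}_X^\bullet=\omega_X^\bullet$). Your option (a), the ``refinement applicable sheaf-by-sheaf'', is exactly this, and it is needed: the hypercohomology vanishing you derive from $H^i(X,L^{-k})=0$ does \emph{not} by itself force each $E_2$-term of the spectral sequence to vanish, so option (b) alone would not close that gap.

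The genuine gap is in $j=0$ and $j=1$. The four-term exact sequence
\[
0\to H_\m^0(K^i)\to K^i\to \bigoplus_k H^0(X,\widetilde{K^i}(k))\to H_\m^1(K^i)\to 0
\]
does not reduce these cases to the vanishing of $H^{j-1}(X,\widetilde{K^i}(k))$: there is no $H^{-1}$, and $H^0(X,\widetilde{K^i}(k))$ certainly need not vanish for $k>0$. What you must show is that $K^i_k\to H^0(X,\widetilde{K^i}(k))$ is an isomorphism for $k>0$, and nothing in your sketch addresses this. The paper treats these two cases by entirely different and much deeper methods: for $j=0$ it uses the degree-preserving injection $\Ext_S^{n-i}(R,\omega_S)_{\geq 0}\hookrightarrow H_I^{n-i}(\omega_S)_{\geq 0}$ from \cite{MSS} together with the vanishing $H_\m^0 H_I^{n-i}(S)_{>-n}=0$ from \cite{MZ}; for $j=1$ it compares $\omega_R^\bullet$ with $\underline{\omega}_R^\bullet$ via the Kov\'acs--Schwede injectivity, runs a spectral sequence computing $\underline{\Omega}_R^0$ from $\underline{\omega}_R^\bullet$, and uses the identification $h^j(\underline{\Omega}_R^0)\cong H_\m^{j+1}(R)_{>0}$ (again from \cite{MSS}) to force the degeneration. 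Your Bertini-induction idea does not substitute for this: cutting by a hyperplane relates $K_i(R)$ and $K_i(R/\ell R)$ only through long exact sequences that mix indices, and one cannot recover $H_\m^0(K_i(R))_{>0}=0$ from the analogous statement for $R/\ell R$.
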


To this purpose, we will use $\omega_R^\bullet$ to denote the normalized dualizing complex of $R$, thus $h^{-i}(\omega_R^\bullet)\cong \Ext_S^{n-i}(R, \omega_S)$. We use $\underline{\omega}_R^\bullet$ to denote $R\Hom_R(\underline{\Omega}_R^0, \omega_R^\bullet)$. Similarly we have $\omega_X^\bullet$, $\underline{\omega}_X^\bullet$ for $X$. Note that when $R$ (resp. $X$) is Du Bois, we have $\underline{\omega}_R^\bullet={\omega}_R^\bullet$ (resp. $\underline{\omega}_X^\bullet=\omega_X^\bullet$). We begin by recalling some vanishing/injectivity result that we will need:

\begin{theorem}[Theorem 3.3 in \cite{KS}]
\label{theorem--key injectivity}
$h^{-i}(\underline{\omega}_R^\bullet)\hookrightarrow h^{-i}({\omega}_R^\bullet)$ is injective for all $i$.
\end{theorem}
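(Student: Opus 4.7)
The plan is to reduce, via a strong log resolution and Grothendieck duality, to a Grauert--Riemenschneider/Kollár-type injectivity for the trace map from a simple normal crossing scheme.

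First I would set up the geometry: embed $X=\Spec R$ in a smooth ambient scheme $Z$ (which we may do Zariski-locally since $R$ is finitely generated over $\kk$), and pick a strong log resolution $\pi:\widetilde Z\to Z$ of the pair $(Z,X)$, so that $\pi$ is proper birational, an isomorphism over $Z\setminus X$, and $E:=(\pi^{-1}X)_{\mathrm{red}}$ is a simple normal crossing divisor in $\widetilde Z$. The description of the Du Bois complex recalled just before the theorem gives $\underline{\Omega}_X^0\simeq R\pi_*\mathcal{O}_E$ compatibly with the map $\mathcal{O}_X\to\underline{\Omega}_X^0$.

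Next I would apply Grothendieck duality. Using $\pi^!\omega_Z^\bullet\simeq\omega_{\widetilde Z}^\bullet$, the adjunction $R\mathcal{H}om_{\widetilde Z}(\mathcal{O}_E,\omega_{\widetilde Z}^\bullet)\simeq\omega_E^\bullet$ for the inclusion of the divisor $E\hookrightarrow\widetilde Z$, and $\omega_X^\bullet\simeq i^!\omega_Z^\bullet$ for the embedding $i:X\hookrightarrow Z$, duality yields a canonical identification
$$\underline{\omega}_X^\bullet=R\mathcal{H}om_X(R\pi_*\mathcal{O}_E,\omega_X^\bullet)\simeq R\pi_*\omega_E^\bullet,$$
under which the natural map $\underline{\omega}_X^\bullet\to\omega_X^\bullet$ becomes the Grothendieck trace $\mathrm{Tr}_\pi:R\pi_*\omega_E^\bullet\to\omega_X^\bullet$. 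The task reduces to showing that $h^{-i}(R\pi_*\omega_E^\bullet)\hookrightarrow h^{-i}(\omega_X^\bullet)$ for every $i$.

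Finally I would invoke a Kollár-type injectivity/torsion-freeness theorem for the trace from an SNC scheme. On each closed stratum $E_J=\bigcap_{j\in J}E_j$, which is smooth, Grauert--Riemenschneider gives torsion-freeness of direct images of the canonical sheaf; Kollár's injectivity theorem upgrades this to injectivity on cohomology sheaves of the pushforward from the SNC union $E$. Feeding this vanishing into the Mayer--Vietoris/hypercohomology spectral sequence computing $R\pi_*\omega_E^\bullet$ from the strata yields the desired injection into $h^{-i}(\omega_X^\bullet)$. The hardest part of the plan is precisely this last step: the stratum-by-stratum Kollár injectivity and the spectral sequence bookkeeping are where the Hodge-theoretic content of Du Bois singularities really enters, whereas the Grothendieck duality identification in the second step is essentially formal once the log resolution is in place.
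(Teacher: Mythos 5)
First, a point of comparison: the paper does not prove this statement at all --- it is quoted verbatim from Kov\'acs--Schwede (Lemma 3.3 of \cite{KS}) --- so your sketch is being measured against the literature rather than against anything internal to the text. Your first two steps are correct and reproduce one of the standard routes: a strong log resolution gives $\underline{\Omega}_X^0\simeq R\pi_*\mathcal{O}_E$ compatibly with the structure map, and Grothendieck duality for the proper morphism $E\to X$ turns this into $\underline{\omega}_X^\bullet\simeq R\pi_*\omega_E^\bullet$ with the comparison map $\underline{\omega}_X^\bullet\to\omega_X^\bullet$ becoming the trace. Up to that point the argument is sound and, as you say, essentially formal.

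The gap is in your last step, and it is not merely bookkeeping. Grauert--Riemenschneider applies to morphisms that are generically finite onto their images, and the strata $E_J$ of the exceptional SNC scheme typically are not: the components of $E$ lying over proper closed subsets of $X$ can have positive-dimensional fibers, so what you need is Koll\'ar's torsion-freeness theorem in its full (non-generically-finite) strength, not GR. More seriously, even granting Koll\'ar's package on each smooth stratum, the Mayer--Vietoris spectral sequence computes $R\pi_*\omega_E^\bullet$ but says nothing about $\omega_X^\bullet$, which is not assembled from the strata; the injectivity of the trace on \emph{every} cohomology sheaf is exactly the theorem to be proved and does not fall out of the filtration. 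The proofs in the literature inject genuine Hodge theory at precisely this point: either Kov\'acs-type injectivity theorems for pushforwards from SNC (Du Bois) schemes, or --- as in \cite{KS} and \cite{MSS} --- the dual formulation, namely the surjectivity of $H_\m^i(R)\to \mathbb{H}_\m^i(\underline{\Omega}_R^0)$, which is deduced from the $E_1$-degeneration of the Hodge--Du Bois spectral sequence (the surjectivity of $H^i(X^{\mathrm{an}},\mathbb{C})\to\mathbb{H}^i(X,\underline{\Omega}_X^0)$ for proper $X$) together with a compactification argument and graded local duality. Your outline becomes a proof only once ``GR plus spectral sequence bookkeeping'' is replaced by a precise appeal to one of these results.
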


\begin{theorem}[Lemma 3.3 in \cite{KSS} and Theorem 3.2 in \cite{Am}]
\label{theorem--Ambro's vanishing}
$h^{-i}(\underline{\omega}_X^\bullet)$ satisfies the Kodaira vanishing: $H^j(X, h^{-i}(\underline{\omega}_X^\bullet)\otimes L^m)=0$ for all $i$, $j\geq 1$, $m\geq 1$ and $L$ ample line bundle on $X$.
\end{theorem}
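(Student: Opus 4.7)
The plan is to translate the statement into a Kodaira vanishing for the direct image of the dualizing complex of a simple normal crossing divisor, and then to invoke a Kollár-type derived-category splitting to pass from vanishing for the complex to vanishing for each of its individual cohomology sheaves. Fix a smooth ambient variety $Y \supseteq X$ and a strong log resolution $\pi : \tilde{Y} \to Y$ of $(Y, X)$, set $E := (\pi^{-1} X)_{\mathrm{red}}$, and write $\pi_E : E \to X$ for the restriction. By the recollection just before Proposition \ref{p:koda}, $\underline{\Omega}_X^0 \cong R\pi_{E,*} \mathcal{O}_E$.

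The first step is Grothendieck duality for the proper morphism $\pi_E$. Using $\pi_E^!\, \omega_X^\bullet \cong \omega_E^\bullet$, we get
\[
\underline{\omega}_X^\bullet \;=\; R\mathcal{H}om_X\bigl(R\pi_{E,*}\mathcal{O}_E,\,\omega_X^\bullet\bigr) \;\cong\; R\pi_{E,*}\, \omega_E^\bullet.
\]
This reduces the problem to showing that $H^j(X,\, h^{-i}(R\pi_{E,*}\omega_E^\bullet) \otimes L^m) = 0$ for all $i$, all $j \geq 1$, and all $m \geq 1$, with $L$ ample on $X$.

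The second and main step is to establish a Kollár-type splitting for the SNC pair $(\tilde{Y}, E)$: namely, the object $R\pi_{E,*}\omega_E^\bullet \in D^b(X)$ decomposes as $\bigoplus_i h^{-i}(R\pi_{E,*}\omega_E^\bullet)[i]$ in the derived category. Writing $E = \bigcup_k E_k$ as a union of smooth components with intersections $E_J := \bigcap_{k \in J} E_k$, the Mayer--Vietoris resolution of $\mathcal{O}_E$ and its Grothendieck dual express $\omega_E^\bullet$ in terms of the $\omega_{E_J}[\dim E_J]$. Combined with Kollár's classical splitting $R\varphi_*\omega_W \cong \bigoplus R^i\varphi_*\omega_W[-i]$ for $\varphi$ a proper morphism from a smooth $W$, together with a weight/induction argument on the stratification of $E$ (equivalently, input from the theory of mixed Hodge modules), this yields the desired decomposition. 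This is essentially the content of \cite[Lemma 3.3]{KSS}/\cite[Theorem 3.2]{Am}, and constitutes the main obstacle of the proof.

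Once the splitting is in place, the third step is routine. Pulling $L$ back to each smooth stratum $E_J$, classical Kodaira vanishing gives $H^j(E_J,\, \omega_{E_J} \otimes (\pi^*L)^m|_{E_J}) = 0$ for $j \geq 1$ and $m \geq 1$. Combining this stratum-wise vanishing with the Mayer--Vietoris filtration of $\omega_E^\bullet$ and the Leray spectral sequence for $\pi_E$ yields $H^j(X,\, R\pi_{E,*}\omega_E^\bullet \otimes L^m) = 0$ for $j \geq 1$. The derived-category splitting of the second step then lets us read off the vanishing for each cohomology sheaf $h^{-i}(\underline{\omega}_X^\bullet)$ separately, completing the proof.
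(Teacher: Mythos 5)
The paper offers no proof of this statement---it is imported as a black box from \cite{KSS} and \cite{Am}---so the only question is whether your sketch would actually establish it. Your first step is fine and is indeed how those sources begin: Grothendieck duality for the proper map $\pi_E$ turns $\underline{\omega}_X^\bullet = R\Hom_X(R\pi_{E,*}\mathcal{O}_E,\omega_X^\bullet)$ into $R\pi_{E,*}\omega_E^\bullet$ with $E$ a projective SNC scheme. The trouble starts in your third step. The bundle $(\pi^*L)^m|_{E_J}$ is only semiample on a stratum $E_J$: strata are typically contracted by $\pi_E$, and on a stratum mapping to a point the restriction is trivial, so $H^{\dim E_J}(E_J,\omega_{E_J})\cong H^0(E_J,\mathcal{O}_{E_J})^\vee\neq 0$. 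Classical Kodaira vanishing therefore does not apply upstairs, and the stratum-wise vanishing you assert is false. The conclusion you draw from it, $\mathbb{H}^j(X,R\pi_{E,*}\omega_E^\bullet\otimes L^m)=0$ for $j\geq 1$, happens to be true, but trivially and for \emph{any} line bundle: by Serre--Grothendieck duality on $E$ this group is dual to $H^{-j}(E,\pi_E^*L^{-m})$, which vanishes for degree reasons. That no positivity is consumed is already a warning that this vanishing cannot carry the content of the theorem.

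The more serious gap is your last sentence. Granting the splitting $R\pi_{E,*}\omega_E^\bullet\cong\bigoplus_i h^{-i}[i]$, the vanishing of $\mathbb{H}^j$ of the total complex for $j\geq 1$ only yields $H^{j}(X,h^{-i}(\underline{\omega}_X^\bullet)\otimes L^m)=0$ in the range $j\geq i+1$. For the top piece $i=\dim X$, i.e.\ for $\underline{\omega}_X$ itself, this range is vacuous (it lies above $\dim X$), yet $H^1(X,\underline{\omega}_X\otimes L^m)=0$ is exactly the case the paper needs in Proposition \ref{p:koda}. What the proof actually requires is the sheaf-level Koll\'ar-type package for morphisms from SNC schemes: torsion-freeness together with the vanishing $H^p(X,R^q\pi_{J,*}\omega_{E_J}\otimes L)=0$ for $p>0$ with $L$ ample \emph{on the base}, fed through the Mayer--Vietoris (weight) filtration of $\omega_E^\bullet$. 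That is precisely the content of \cite[Theorem 3.2]{Am} (and of Fujino's later treatment), which your second step already defers to as a black box. So the proposal is circular exactly where the theorem has content, and the one step it does carry out in detail does not work.
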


\begin{theorem}[Proposition 4.4 and Theorem 4.5 in \cite{MSS}]
\label{theorem--MSS injectivity}
Suppose $X$ is Du Bois, then:
\begin{enumerate}
\item $h^j(\underline{\Omega}_R^0)\cong H_\m^{j+1}(R)_{>0}$ for all $j\geq 1$, and $h^0(\underline{\Omega}_R^0)/R\cong H_\m^1(R)_{>0}$.
\item We have a degree-preserving injection $\Ext_S^{n-i}(R, \omega_S)_{\geq 0} \hookrightarrow H_I^{n-i}(\omega_S)_{\geq 0}$ for all $i$.
\end{enumerate}
\end{theorem}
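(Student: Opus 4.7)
The plan is to extract graded information by comparing the Deligne--Du Bois complex of the affine cone $R$ with that of its projective quotient $X$, through the standard graded structure. Set $U=\Spec R\setminus V(\m)$ with structure map $\pi\colon U\to X$, a smooth $\mathbb{G}_m$-torsor. Compatibility of $\underline{\Omega}^0$ with smooth morphisms, together with the Du Bois hypothesis $\O_X\simeq \underline{\Omega}_X^0$, gives $\underline{\Omega}_R^0|_U\simeq L\pi^*\underline{\Omega}_X^0\simeq \O_U$. I will also need a finer graded description: choosing a $\mathbb{G}_m$-equivariant log resolution $\widetilde\pi\colon Y\to \Spec R$ with reduced exceptional divisor $E$ and using $\underline{\Omega}_R^0\simeq R\widetilde\pi_*\O_E$, the weight-$n$ component of $\underline{\Omega}_R^0$ is identified for $n>0$ with $R\Gamma(X,\O_X(n))$ (via descent along the torsor and $X$ Du Bois), vanishes for $n<0$, and equals $\kk$ in degree $0$.

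For part (1), let $C$ be the cone of the canonical morphism $R\to \underline{\Omega}_R^0$ in $D^b(R)$. Since $C|_U=0$, the complex $C$ is $\m$-torsion, so $R\Gamma_\m(C)\simeq C$. Applying $R\Gamma_\m$ to the triangle $R\to \underline{\Omega}_R^0\to C$ yields the long exact sequence
$$\cdots\to H_\m^j(R)\to \mathbb{H}_\m^j(\underline{\Omega}_R^0)\to h^j(C)\to H_\m^{j+1}(R)\to\cdots.$$
Using the injectivity $R\hookrightarrow h^0(\underline{\Omega}_R^0)$ (valid for reduced $R$, which follows from $X$ Du Bois), one has $h^0(C)=h^0(\underline{\Omega}_R^0)/R$ and $h^j(C)=h^j(\underline{\Omega}_R^0)$ for $j\geq 1$. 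The main obstacle, and the genuine use of Du Bois, is the Kodaira-type vanishing $\mathbb{H}_\m^j(\underline{\Omega}_R^0)_{>0}=0$ for all $j$. This I plan to prove by combining the graded description of $\underline{\Omega}_R^0$ with the standard local cohomology triangle: the weight-$n$ part ($n>0$) of $R\Gamma(U,\underline{\Omega}_R^0|_U)=R\Gamma(U,\O_U)$ is $R\Gamma(X,\O_X(n))\simeq (\underline{\Omega}_R^0)_n$, and the natural comparison between them is an isomorphism, forcing the cofiber $R\Gamma_\m(\underline{\Omega}_R^0)_{>0}$ to vanish. The grading vanishing in $n<0$ and $(\underline{\Omega}_R^0)_0\simeq \kk$ take care of $h^j(C)_{\leq 0}=0$, so the long exact sequence then delivers the desired isomorphisms $h^j(\underline{\Omega}_R^0)\cong H_\m^{j+1}(R)_{>0}$ for $j\geq 1$ and $h^0(\underline{\Omega}_R^0)/R\cong H_\m^1(R)_{>0}$.

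For part (2), the strategy is to dualize (1). By graded local duality over $S$, $\Ext_S^{n-i}(R,\omega_S)$ is the graded Matlis dual of $H_\m^i(R)$, so the part $\Ext_S^{n-i}(R,\omega_S)_{\geq 0}$ dualizes $H_\m^i(R)_{\leq 0}$. Applying graded local duality to each quotient $S/I^k$ and passing to the colimit identifies $H_I^{n-i}(\omega_S)\cong(\varprojlim_k H_\m^i(S/I^k))^\vee$, and the comparison map $\Ext_S^{n-i}(R,\omega_S)\to H_I^{n-i}(\omega_S)$ becomes the Matlis dual of the natural transition $\varprojlim_k H_\m^i(S/I^k)\to H_\m^i(R)$. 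Hence establishing the injection in degrees $\geq 0$ is equivalent to proving this transition is surjective in degrees $\leq 0$. The latter surjectivity I plan to deduce from the same Kodaira-type input controlling the positive-degree local cohomology of $\underline{\Omega}_R^0$: identifying $H_\m^i(R)_n$ for $n\leq 0$ with the sheaf cohomology $H^{i-1}(X,\O_X(n))$ (together with the exact sequence handling $n=0$), the Kodaira vanishing for Du Bois $X$ ensures that these classes lift through the formal neighborhood, which is exactly the required pro-surjectivity.
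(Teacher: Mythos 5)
First, note that the paper does not prove this statement at all: it is imported verbatim from Ma--Schwede--Shimomoto \cite{MSS} (Proposition 4.4 and Theorem 4.5 there), so there is no internal proof to compare against. Judged on its own merits, your outline for part (1) follows the standard route and is essentially sound \emph{modulo its key lemma}: everything hinges on the graded computation of $\underline{\Omega}_R^0$ for the affine cone, namely that its weight-$n$ piece is $R\Gamma(X,\O_X(n))$ for $n>0$, vanishes for $n<0$, and is $\kk$ (not $R\Gamma(X,\O_X)$) in weight $0$. You assert this rather than prove it, and the mechanism you cite is slightly off: the formula $\underline{\Omega}_Z^0\cong R\pi_*\O_E$ requires $E$ to be the reduced preimage of $Z$ inside a log resolution of a pair $(Y,Z)$ with $Y$ smooth and ambient, not the exceptional divisor of an equivariant resolution of $\Spec R$ itself. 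The clean way to get the weight decomposition is the blow-up/descent triangle for $\underline{\Omega}^0$ applied to the deformation to the total space of $\O_X(-1)$ over $X$, which is exactly where the delicate weight-$0$ statement comes from. Granting that lemma, your torsion-cone argument and the two long exact sequences do yield part (1).

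The genuine gap is in part (2). Your duality reduction is correct: injectivity of $\Ext_S^{n-i}(R,\omega_S)_{\geq 0}\to H_I^{n-i}(\omega_S)$ is equivalent to surjectivity of $H_\m^i(S/I^k)_{\leq 0}\to H_\m^i(S/I)_{\leq 0}$ for all $k$. But the justification you offer for that surjectivity --- that ``Kodaira vanishing for Du Bois $X$ ensures that these classes lift through the formal neighborhood'' --- does not work: Kodaira--Ambro vanishing kills cohomology of $h^{-i}(\underline{\omega}_X^\bullet)$ twisted by positive powers of an ample bundle; it says nothing about lifting classes of $H^{i-1}(X,\O_X(n))$, $n\leq 0$, to the infinitesimal thickenings $\Proj(S/I^k)$. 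The two ingredients actually needed are absent from your sketch: (a) $\underline{\Omega}^0$ depends only on the reduced structure, so $\underline{\Omega}^0_{S/I^k}=\underline{\Omega}^0_{S/I}$ and one gets a commutative square comparing $H_\m^i(S/I^k)\to H_\m^i(S/I)$ with the identity on $\mathbb{H}_\m^i(\underline{\Omega}^0_{S/I})$; and (b) the Kov\'acs--Schwede injectivity theorem (Theorem \ref{theorem--key injectivity} of this paper) applied to the \emph{non-reduced} rings $S/I^k$, which after dualizing gives the surjection $H_\m^i(S/I^k)\twoheadrightarrow \mathbb{H}_\m^i(\underline{\Omega}^0_{S/I})$. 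Combining (a), (b) with the degree-$\leq 0$ isomorphism $H_\m^i(S/I)_{\leq 0}\cong \mathbb{H}_\m^i(\underline{\Omega}^0_{S/I})_{\leq 0}$ that your part (1) provides yields the required surjectivity; without (a) and (b) the argument does not close.
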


\begin{theorem}[Theorem 1.2 in \cite{MZ}]
\label{theorem--MZ Eulerian}
Let $S$ be the standard graded polynomial ring in $n$ variables over $\kk$ and let $I$ be a homogeneous ideal. Then all socle elements of $H_\m^0H_I^{j}(S)$ have degree $-n$. In particular, $H_\m^0H_I^{j}(S)_{>-n}=0$ for all $j$.
\end{theorem}

We are now ready to prove Proposition \ref{p:koda}:

\begin{proof}
First we note that $h^{-i}(\omega_R^\bullet)\cong \Ext_S^{n-i}(R, \omega_S)$. When $j\geq 2$, we have
$$H_\m^j(h^{-i}({\omega}_R^\bullet))_{m}=H^{j-1}(X, \widetilde{h^{-i}({\omega}_R^\bullet)}\otimes L^m)=H^{j-1}(X, h^{-i+1}(\omega_X^\bullet)\otimes L^m)=0$$
for all $m>0$ by Theorem \ref{theorem--Ambro's vanishing} (here $L=O_X(1)$ is very ample, and since $X$ has Du Bois singularities $\omega_X^\bullet=\underline{\omega}_X^\bullet$). Thus $H_\m^j(h^{-i}({\omega}_R^\bullet))_{>0}=0$ for all $i$ and $j\geq 2$.

We next consider the case $j=0$. We note that by Theorem \ref{theorem--MSS injectivity} we have $$H_\m^0(\Ext_S^{n-i}(R, \omega_S))_{>0}\hookrightarrow\Ext_S^{n-i}(R, \omega_S)_{>0}\hookrightarrow H_I^{n-i}(\omega_S)_{> 0}\hookrightarrow H_I^{n-i}(\omega_S).$$
Because $H_\m^0(\Ext_S^{n-i}(R, \omega_S))_{>0}$ is clearly $\m$-torsion, the above injection factors through $H_\m^0H_I^{n-i}(\omega_S)$. Thus we have an induced injection $$H_\m^0(\Ext_S^{n-i}(R, \omega_S))_{>0}\hookrightarrow H_\m^0H_I^{n-i}(\omega_S)_{>0}.$$
Now we note that $H_\m^0H_I^{n-i}(\omega_S)_{>0}\cong H_\m^0H_I^{n-i}(S)_{>-n}$. But by Theorem~\ref{theorem--MZ Eulerian}, we have $H_\m^0H_I^{n-i}(S)_{>-n}=0$ and thus the injection above implies $$H_\m^0(h^{-i}({\omega}_R^\bullet))_{>0}=H_\m^0(\Ext_S^{n-i}(R, \omega_S))_{>0}=0$$ for all $i$.

It remains to prove the case $j=1$. By Theorem \ref{theorem--key injectivity} we have
\begin{equation}
0\to h^{-i}(\underline{\omega}_R^\bullet)\to h^{-i}({\omega}_R^\bullet) \to C\to 0
\end{equation}
such that $\Supp C=\{\m\}$ because $X$ is Du Bois. It follows from this sequence that $H_\m^0(h^{-i}(\underline{\omega}_R^\bullet))_{>0}=0$ for all $i$ since it injects into $H_\m^0(h^{-i}({\omega}_R^\bullet))_{>0}=0$.

At this point, we note that the long exact sequence of local cohomology of (1) implies that $H_\m^j(h^{-i}(\underline{\omega}_R^\bullet))\cong H_\m^j(h^{-i}({\omega}_R^\bullet))=0$ for all $j\geq 2$ and that $H_\m^1(h^{-i}(\underline{\omega}_R^\bullet))\twoheadrightarrow H_\m^1(h^{-i}({\omega}_R^\bullet))$. Thus it suffices to show $H_\m^1(h^{-i}(\underline{\omega}_R^\bullet))_{>0}=0$. Since $\omega_R^\bullet$ is the normalized dualizing complex, by the definition of $\underline{\omega}_R^\bullet$, we have
$$\underline{\Omega}_R^0\cong R\Hom_R(\underline{\omega}_R^\bullet, \omega_R^\bullet).$$ Thus we have a spectral sequence $$E_2^{ij}=h^{-j}(R\Hom_R(h^{-i}(\underline{\omega}_R^\bullet), \omega_R^\bullet))\Rightarrow h^{i-j}(\underline{\Omega}_R^0).$$
By graded local duality, $(E_2^{ij})^\vee\cong H_\m^j(h^{-i}(\underline{\omega}_R^\bullet))$, and we already know the latter one vanishes in degree $>0$ for all $i$ and $j\neq 1$, thus $(E_2^{ij})_{<0}=0$ for all $i$ and $j\neq 1$. Therefore the spectral sequence in degree $<0$ degenerates at $E_2$-page. It follows that $$h^{-1}(R\Hom_R(h^{-i}(\underline{\omega}_R^\bullet), \omega_R^\bullet))_{<0}=(E_2^{i1})_{<0}\cong h^{i-1}(\underline{\Omega}_R^0)_{<0} .$$ By Theorem \ref{theorem--MSS injectivity}, $h^{i-1}(\underline{\Omega}_R^0)_{<0}=0$ for all $i$. This implies $$h^{-1}(R\Hom_R(h^{-i}(\underline{\omega}_R^\bullet), \omega_R^\bullet))_{<0}=0$$ and thus by graded local duality $H_\m^1(h^{-i}(\underline{\omega}_R^\bullet))_{>0}=0$.
\end{proof}

\section{Regularity bounds in positive characteristic}\label{mainp}

The purpose of this section is to prove  the analog of Proposition \ref{p:koda} in characteristic $p>0$. The natural analogous assumption would be to assume $X=\Proj R$ is $F$-injective. Albeit we do not know any counterexample to the thesis of Theorem \ref{main0} under the assumption that $X$ is $F$-injective, we know that Proposition \ref{p:koda} is not true in positive characteristic (as Kodaira vanishing fails even if $X$ is smooth). As usual for this business, we can settle the problem  assuming that $R$ itself is $F$-pure.

\begin{proposition}\label{sp}
If $\chara(\kk)=p>0$ and $R$ is $F$-pure, then
\[H_{\mm}^j(\Ext_S^i(R,\omega_S))_{>0}=0 \ \ \ \forall \ i,j.\]
In particular, $\reg(\Ext_S^{i}(R,\omega_S))\leq \dim \Ext_S^{i}(R,\omega_S)$ for all $i$.
\end{proposition}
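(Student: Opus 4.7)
The plan is to mirror the characteristic-zero proof of Proposition \ref{p:koda}, replacing the Kodaira-type vanishing coming from Du Bois singularities with the injectivity of Frobenius on local cohomology. Write $K_i := \Ext_S^{n-i}(R,\omega_S)$, so that the proposition asks for $H_\m^j(K_i)_{>0}=0$ for all $i,j$. The central object will be a Cartier / Frobenius-trace map $C_i \colon F_*K_i \twoheadrightarrow K_i$ which is split surjective precisely because $R$ is $F$-pure, and which contracts graded degrees by a factor of $p$.

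First, $F$-purity gives that the Frobenius $R \hookrightarrow F_*R$ splits as $R$-modules. Applying $H_\m^i(-)$ yields a split injective $S$-module map $H_\m^i(R)\hookrightarrow F_*H_\m^i(R)$, and in graded pieces this reads $H_\m^i(R)_a\hookrightarrow H_\m^i(R)_{pa}$ for every $a\in\Z$. Graded local duality identifies $K_i$ with the graded Matlis dual of $H_\m^i(R)$, and a direct degree check gives the compatibility $(F_*M)^\vee\cong F_*(M^\vee)$. Dualising therefore yields an $S$-module split surjection
\[
C_i \colon F_*K_i \twoheadrightarrow K_i,
\]
which on graded pieces gives surjections $(K_i)_{pa}\twoheadrightarrow (K_i)_a$ for every $a\in\Z$. (Equivalently, $C_i$ can be obtained by applying $\Ext_S^{n-i}(-,\omega_S)$ to the splitting $F_*R\twoheadrightarrow R$ and invoking Grothendieck duality $\Ext_S^{n-i}(F_*R,\omega_S)\cong F_*K_i$ for the finite flat map $F\colon S\to S$.)

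Next, apply $H_\m^j(-)$. Any additive functor carries a split surjection to a split surjection, and $F_*$ commutes with $H_\m^j$ because $\m^k$ and $\m^{[p^k]}$ have the same radical, so $\Gamma_\m\circ F_* = F_*\circ \Gamma_\m$. Hence the induced map $F_*H_\m^j(K_i)\twoheadrightarrow H_\m^j(K_i)$ is split surjective, which in graded pieces reads $H_\m^j(K_i)_{pa}\twoheadrightarrow H_\m^j(K_i)_a$ for every $a\in\Z$.

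Finally, $K_i$ is finitely generated over $S$, so $H_\m^j(K_i)$ is graded Artinian and vanishes in all sufficiently large degrees. If $H_\m^j(K_i)_a\neq 0$ for some $a>0$, iterating the surjection yields nonzero pieces in the unbounded sequence of degrees $p^ka\to\infty$, a contradiction. Thus $H_\m^j(K_i)_{>0}=0$ for all $i,j$. The regularity bound $\reg K_i\leq \dim K_i$ is then immediate from $\reg K_i=\max_j\{\mathrm{end}(H_\m^j(K_i))+j\}$ together with the vanishing of $H_\m^j(K_i)$ for $j>\dim K_i$. The most delicate step is the verification that $C_i$ really is an $S$-linear map whose graded effect is contraction by $p$; either of the two constructions sketched above (direct Matlis dualisation, or Grothendieck duality applied to the splitting of Frobenius) resolves this, and the rest of the argument is a one-line degree chase.
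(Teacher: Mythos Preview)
Your proof is correct and follows essentially the same route as the paper's: exploit the splitting of Frobenius to obtain a split surjection on the $\Ext$ modules that contracts degrees by a factor of $p$, push this through $H_\m^j$, and use the Artinianness of $H_\m^j(K_i)$ to conclude. The only minor point you gloss over is the passage from $F$-purity to an honest splitting of $R\hookrightarrow F_*R$; the paper handles this by first enlarging $\kk$ to be perfect (so that $R$ is $F$-finite) and citing \cite[Corollary~5.3]{HR}.
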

\begin{proof}
Let $q=p^t$ for some $t\in\N$. Given an $R$-module $M$, we denote by $^q \!M$ the $R$-module that is $M$ as an abelian group and has $R$-action given by $r\cdot m=r^{q}m$. The statement allows to enlarge the field, thus we can assume that $\kk$ is perfect.
Under the assumptions, by \cite[Corollary 5.3]{HR} the $t$th iterated Frobenius power $F^t:R \ \rightarrow \ ^q \!R$ \ splits as a map of $R$-modules, so for any $j=0,\ldots ,n$ the induced map on local cohomology
\[H_{\mm}^j(R)\xrightarrow{H_{\mm}^j(F^t)} H_{\mm}^j(^q \! R)\]
is an injective splitting of $R$-modules as well. Notice that
a degree $s$ element in $H_{\mm}^j(R)$ is mapped by $H_{\mm}^j(F^t)$ to a degree $sq$ element of $H_{\mm}^j(^q \! R)$.
By abuse of notation, from now on we will write $F^t$ for $H_{\mm}^j(F^t)$. Let us apply to such map the functor $-^{\vee}=\Hom_{\kk}(-,\kk)$, obtaining the surjective splitting of $R$-modules:
\[H_{\mm}^j(R)^{\vee}\xleftarrow{(F^t)^{\vee}} H_{\mm}^j(^q \!R)^{\vee}.\]
Furthermore, notice that if $\eta^*$ is a degree $u$ element in $H_{\mm}^j(^q \!R)^{\vee}$, then $(F^t)^{\vee}(\eta^*)=\eta^*\circ F^t$ is 0 if $q$ does not divide $u$, while it has degree $s$ if $u=qs$. By graded Grothendieck's duality, we therefore have a surjective splitting of $R$-modules
\[\Ext_S^{n-j}(R,\omega_S)\xleftarrow{(F^t)^{\vee}}\Ext_S^{n-j}(^q \!R,\omega_S)\cong \ ^q \!\Ext_S^{n-j}(R,\omega_S)\]
which ``divides" the degrees by $q$. By applying the local cohomology functor $H_{\mm}^k(-)$ to the above splitting, we get a surjective map of graded $R$-modules
\[H_{\mm}^k(\Ext_S^{n-j}(R,\omega_S))\xleftarrow{H_{\mm}^k((F^t)^{\vee})}H_{\mm}^k(^q\Ext_S^{n-j}(R,\omega_S))\cong \ ^q \!H_{\mm}^k(\Ext_S^{n-j}(R,\omega_S))\]
dividing the degrees by $q$.
Hence, because $q$ can be chosen arbitrarily high, the graded surjection of $R$-modules $H_{\mm}^k((F^t)^{\vee})$ yields that $H_{\mm}^k((\Ext_S^{n-j}(R,\omega_S)))_s$ is actually 0 whenever $s>0$.
\end{proof}

\section{Applications and examples}\label{apps}
In this section, we first put together our technical results to prove main Theorem \ref{main0}.
\begin{proof}[Proof of Theorem \ref{main0}]
Assume $R$ satisfies $(S_r)$ and one of the assumptions: $X=\Proj R$ is Du Bois (characteristic $0$) or $R$ is $F$-pure (characteristic $p$). We can extend $\kk$ if necessary and hence assume it is algebraically closed. By Theorem \ref{mtr3} and Proposition \ref{mtr4}, all the conclusions  follow if we can establish that $R$ is $(MT_r)$. But  Propositions \ref{algSl} tells us that $\dim  \Ext_S^{n-i}(R,\omega_S) \leq i-r $, and Propositions \ref{p:koda} and \ref{sp} assert that  $\reg  \Ext_S^{n-i}(R,\omega_S) \leq \dim \Ext_S^{n-i}(R, \omega_S)$, which gives precisely what we want.
\end{proof}

The assumptions of the above theorem can be slightly relaxed: for example if the ideal $I$ admits a squarefree initial ideal (e.g. if $I$ is a binomial edge ideal or $R$ an algebra with straightening law) then, even if $R$ might not be $F$-pure (see \cite[Corollary 5.1, Remark 5.2]{KV} for a discussion on this), the conclusion of the theorem continues to hold. To formalize this let us introduce the following concept:

\begin{definition}
We say that an $\N$-graded $R_0$-algebra $R$ with $R_0$ a field of positive characteristic, is {\it deformation equivalent to an $F$-pure ring} (de$F$pure for short) if there exist a domain $A$, essentially of finite type over $\kk$, and a flat finitely generated positively graded $A$-algebra $R_A=\oplus_{i\in\N}[R_A]_i$ with ($[R_A]_0=A$), one fibre is $R$ and one is an $F$-pure standard graded algebra. In other words, there exist prime ideals $\pp,\qq \in A$ such that $R_A\otimes_A\kappa(\pp)\cong R$ and $R_A\otimes_A\kappa(\qq)$ is an $F$-pure standard graded $\kappa(\qq)$-algebra.
\end{definition}

Obviously $R$ is de$F$-pure whenever it is $F$-pure (take $A=\kk$ and $\pp=\qq=\{0\}$). More interestingly, any graded Algebra with Straightening Law is de$F$-pure. On the contrary, de$F$-pure rings need not be $F$-injective: e.g. $R=\kk[x,y,z]/(x^3+y^3+z^3+xyz)$ is de$F$-pure but not $F$-injective if $\kk$ has characteristic $5$ (take $A=\kk[t]$, $R_A=\kk[x,y,z,t]/(tx^3+ty^3+tz^3+xyz)$, $\pp=(t-1)$ and $\qq=(t)$). 

\begin{corollary}
Let $R$ be a $\N$-graded algebra satisfying $(S_r)$ over a field of positive characteristic $R_0$. If $R$ is de$F$-pure the same conclusion of Theorem \ref{main0} holds.
%
%
\end{corollary}
\begin{proof}
If $R_A\otimes_A\kappa(\pp)$ satisfies $(S_r)$, then the general fibre $R_A\otimes_A\kappa(0)$ satisfies $(S_r)$ as well by \cite[Theorem 4.5]{AF}. On this context, if $r\geq 2$, $R_A\otimes_A\kappa(0)$ satisfies $(S_r)$ if and only if $\dim(\Ext^{n-i}_{S_A\otimes_A\kappa(0)}(R_A\otimes_A\kappa(0),S_A\otimes_A\kappa(0)))$ is at most $i-r$ for all $i<r$, (see Proposition \ref{algSl}). By \cite[Corollary 1.5]{KK} the same dimension inequality holds true for the fibre at $\qq$, since $R_A\otimes_A\kappa(\qq)$ is $F$-pure. So $\dim(\Ext^{n-i}_{S_A\otimes_A\kappa(\qq)}(R_A\otimes_A\kappa(\qq),S_A\otimes_A\kappa(\qq)))$ is at most $i-r$ for all $i<r$, hence $R_A\otimes_A\kappa(\qq)$ satisfies $(S_r)$ condition by Proposition \ref{algSl}. At this point, since $R$ and $R_A\otimes_A\kappa(\qq)$, being fibres of the same flat family, have the same Hilbert function, we can replace the latter to the first one and exploit Theorem \ref{main0}.
\end{proof}

The $h$-vector inequalities can be translated to information about the Hilbert functions. Below we give some precise statements. Recall that $R=S/I$, $e= \height I, d=\dim R$. Let $s_i =  \dim_k I_i$.
\begin{proposition}
For each $l\geq 0$, $$h_l = {e+l-1 \choose l} - \sum_{j=0}^l (-1)^js_{l-j}{d \choose j}.$$
\end{proposition}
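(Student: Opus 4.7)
The plan is to compute the numerator $p_R(t) = (1-t)^d H_R(t)$ of the Hilbert series of $R$ in two different ways and compare the coefficient of $t^l$.

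First, from the short exact sequence $0 \to I \to S \to R \to 0$ of graded $S$-modules, the Hilbert series is additive, giving
\[
H_R(t) = H_S(t) - H_I(t) = \frac{1}{(1-t)^n} - \sum_{i \geq 0} s_i\, t^i.
\]
Multiplying by $(1-t)^d$ and using $d = n-e$, I obtain
\[
p_R(t) = (1-t)^d H_R(t) = \frac{1}{(1-t)^e} - (1-t)^d \sum_{i \geq 0} s_i\, t^i.
\]

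The key step is to expand both terms on the right-hand side. By the negative binomial series,
\[
\frac{1}{(1-t)^e} = \sum_{l \geq 0} \binom{e+l-1}{l} t^l,
\]
and by the ordinary binomial theorem $(1-t)^d = \sum_{j=0}^d (-1)^j \binom{d}{j} t^j$. Multiplying out, the coefficient of $t^l$ in $(1-t)^d \sum_i s_i t^i$ is
\[
\sum_{j=0}^{\min(l,d)} (-1)^j \binom{d}{j} s_{l-j} = \sum_{j=0}^{l} (-1)^j \binom{d}{j} s_{l-j},
\]
where we adopt the standard convention $\binom{d}{j}=0$ for $j>d$.

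Finally, by definition of the $h$-vector $p_R(t) = \sum_l h_l\, t^l$, so comparing the coefficient of $t^l$ in the two expressions for $p_R(t)$ yields
\[
h_l = \binom{e+l-1}{l} - \sum_{j=0}^l (-1)^j \binom{d}{j} s_{l-j},
\]
as required. There is no real obstacle here; the only thing to be careful about is the range of summation, which is handled by the vanishing convention for $\binom{d}{j}$ when $j>d$ (and automatically $s_{l-j}=0$ when $l-j<0$).
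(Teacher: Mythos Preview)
Your proof is correct and follows essentially the same approach as the paper: both arguments hinge on the identity $(1-t)^d/(1-t)^n = 1/(1-t)^e$ together with the standard expansions of $1/(1-t)^e$ and $(1-t)^d$. The only difference is organizational: the paper first extracts $h_l = \sum_{j=0}^l (-1)^j r_{l-j}\binom{d}{j}$, then substitutes $r_i = \binom{n+i-1}{i} - s_i$ and proves the resulting binomial identity, whereas you multiply $H_R(t)=H_S(t)-H_I(t)$ by $(1-t)^d$ first and read off the coefficient directly, which is a slightly more streamlined route to the same computation.
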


\begin{proof}
The Hilbert series of $R$ can be written as:
$$H_R(t)= \sum r_it^i = \frac{h_0+h_1t+ \cdots + h_lt^l}{(1-t)^d}  \cdots (*) $$
where $r_i=\dim_{\kk}R_i$. By comparing coefficients in $(*)$ we always have:
$$h_l =  \sum_{j=0}^l (-1)^jr_{l-j}{d \choose j} $$
Since $s_i= {n+i-1 \choose i} - r_i$, what we need to prove amount to:
$$\sum_{j=0}^l (-1)^j{n+l-j-1 \choose l-j}{d \choose j} =   {e+l-1 \choose l}$$
But we have $$\sum_{i\geq 0} {n+i-1 \choose i}t^i = \frac{1}{(1-t)^n}$$
Since $n=d+e$, we have
$$\left(\sum_{i\geq 0} {n+i-1 \choose i}t^i\right)(1-t)^d = \frac{1}{(1-t)^e}$$
The $l$-th coefficient on both sides give exactly the equality we seek.
\end{proof}

From the equality above, the following is an immediate consequence of our main Theorem \ref{main0}. It says, for instance, that if a homogenous ideal $I$ in $R=\kk[x_1,\dots, x_n]$ is such that $R/I$ is $(S_3)$ and $\Proj(R/I)$ is Du Bois, and $I$ contains no linear or quadratic forms, then the number of cubic generators of $I$ is at most $\binom{\height I+2}{3}$. Note that such result is not obvious even in the special case $\height I=2$, and $R/I$ is a Cohen-Macaulay (or just depth $3$ or more isolated singularity). One can view such statements as generalizations of the classical bound on number of quadrics mentioned in the introduction.

\begin{corollary}\label{cor1}
Let $R=\kk[x_1,\dots, x_n]/I$ be a standard graded algebra over a field $\kk$ with $d=\dim R$ and $e=\height I$. Let $s_i= \dim_k I_i$. Assume $R$ satisfies $(S_r)$ and either:
\begin{enumerate}

\item $\chara(\kk)=0$ and $X=\Proj R$ is Du Bois.
\item $\chara(\kk)=p>0$ and $R$ is $F$-pure (or deformation equivalent to an $F$-pure ring).
\end{enumerate}
Then for each $l\leq r$, we have:
$$\sum_{j=0}^l (-1)^js_{l-j}{d \choose j}\leq {e+l-1 \choose l}.$$
In particular, if $I$ contains no elements of degree less than $l$, then $s_l \leq {e+l-1 \choose l}$. Moreover, if equality happens for any such $l$, then $R$ is Cohen-Macaulay.
\end{corollary}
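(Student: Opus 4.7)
The plan is to combine Theorem \ref{main0} with the closed formula for $h_l(R)$ established in the proposition immediately preceding the corollary. The first step is to invoke Theorem \ref{main0}: under $(S_r)$ together with either Du Bois in characteristic $0$ or $F$-purity in characteristic $p$, one has $h_l(R)\geq 0$ for every $0\leq l\leq r$, and moreover $h_l(R)=0$ for some $l\leq r$ forces $R$ to be Cohen--Macaulay. The second step is a direct substitution: the preceding proposition gives
\[
h_l(R) \;=\; \binom{e+l-1}{l} - \sum_{j=0}^l (-1)^j s_{l-j}\binom{d}{j},
\]
so rearranging $h_l(R)\geq 0$ produces exactly the displayed inequality
$\sum_{j=0}^l(-1)^j s_{l-j}\binom{d}{j}\leq \binom{e+l-1}{l}$.

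For the \emph{in particular} clause, I would observe that the hypothesis that $I$ has no elements of degree $<l$ means $s_{l-j}=0$ for every $1\leq j\leq l$, so the only surviving term on the left is the one with $j=0$, namely $s_l\binom{d}{0}=s_l$. This reduces the inequality to $s_l\leq \binom{e+l-1}{l}$. For the equality assertion, I would note that equality in either version of the inequality is, via the same substitution, equivalent to $h_l(R)=0$; then the final sentence of Theorem \ref{main0} immediately yields that $R$ is Cohen--Macaulay.

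Since every ingredient is already available, there is no genuine technical obstacle: the corollary is a direct translation of the non-negativity of the first $r+1$ entries of the $h$-vector into an inequality on $\dim_\kk I_i$, with the rigidity statement coming for free from the Cohen--Macaulayness conclusion in Theorem \ref{main0}. The only point requiring minor care is checking that the binomial identity collapses correctly when $I$ is generated in degree $\geq l$, but this is immediate from $s_{l-j}=0$ for $j\geq 1$.
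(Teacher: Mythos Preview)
Your proposal is correct and matches the paper's approach exactly: the paper simply states that the corollary is an immediate consequence of Theorem~\ref{main0} via the preceding formula for $h_l$, and you have spelled out precisely that deduction. There is nothing to add.
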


\begin{remark}
In \cite{KM}, it is established that for a monomial ideal $I$ (not necessarily square-free), the deficiency modules of $R=S/I$ satisfies the same regularity bounds as in Propositions \ref{p:koda} and \ref{sp}, so our main results above applied for this situation as well.
It would be interesting to know if the conclusion of Theorem \ref{main0} holds replacing $R$ with $R_{\mathrm{red}}=S/\sqrt{I}$ in the assumptions (1) and (2): we note that, if $R_{\mathrm{red}}$ is a Du Bois singularity (which is more restrictive than asking that $\Proj R$ is Du Bois) in characteristic 0 or $R$ is $F$-pure in positive characteristic, then, using Proposition \ref{algSl} $R_{\mathrm{red}}$ satisfies $(S_r)$ whenever $R$ does by \cite[Remark 2.4 (2) and (3)]{DDS}. In the next proposition we point out a case where we can apply recent vanishing theorems obtained in \cite{BBLSZ}.
 \end{remark}

 \begin{proposition}
 Let $J\subset S=\kk[x_1,\ldots ,x_n]$ be a homogeneous ideal such that $\Proj S/J$ is smooth, and assume that $\kk$ is a field of characteristic 0. Given a positive integer $t$, consider the saturation $I$ of $J^t$ and $(h_0,\ldots ,h_s)$ the $h$-vector of $R=S/I$. If $R$ satisfies $(S_r)$, then $h_i\geq 0$ for $i=0,\ldots , r$. Also, $h_r+ h_{r+1}+\dots+h_s \geq 0$, or equivalently $R$ has multiplicity at least $h_0+h_1+\dots +h_{r-1}$.
If furthermore $R$ has Castelnuovo-Mumford regularity less than $r$ or $h_i=0$ for some $i\leq r$,  then $R$ is Cohen-Macaulay.
 \end{proposition}
 \begin{proof}
Calling $X=\Proj S/I$, by \cite[Theorem 1.4]{BBLSZ} we have $H^k(X,\O_X(-j))=0$ for all $k<\dim X$ and $j>0$. In other words, $H^k_{\mm}(R)_{<0}=0$ for all $k=2,\ldots ,\dim R-1$. The conclusion is obvious if $r\leq 1$,  so we can assume $r\geq 2$. Hence $H^0_{\mm}(R)=H^1_{\mm}(R)=0$. By graded duality, we have $\Ext^{n-i}_S(R,\omega_S)_{>0}=0$ for any $i<\dim R$. Therefore $R$ satisfies $(MT_r)$ by Proposition \ref{algSl}, and we conclude by Theorem \ref{mtr3} and Proposition \ref{mtr4}.
 \end{proof}

The next corollary fits the well-known but mysterious theme that ``nice singularities of small codimension" should be Cohen-Macaulay.

\begin{corollary}\label{cor2}
Let $R=\kk[x_1,\dots, x_n]/I$ be a standard graded algebra over a field $\kk$ of characteristic $0$ with $e=\height I$ and $d=\dim R$. Let $d_1 \geq d_2 \geq \dots $ be the degree sequence of a minimal set of generators for $I$. Assume that $R$ is unmixed, equidimensional, Cohen-Macaulay in codimension $l$ and $X=\Proj R$ has only MJ-log canonical singularities.\footnote{We refer to \cite{EI} for detailed definition and properties of MJ-log canonical singularities.} If $e+l \geq d_1+\dots +d_e$, then $R$ is Cohen-Macaulay.
\end{corollary}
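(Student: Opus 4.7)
The plan is to apply Theorem \ref{main0} to $R$ with $r = l+1$. This requires three inputs: (a) $X = \Proj R$ is Du Bois, (b) $R$ satisfies the Serre condition $(S_{l+1})$, and (c) $\reg R \leq l$. Once these are in hand, Propositions \ref{algSl} and \ref{p:koda} yield that $R$ is $(MT_{l+1})$, and Proposition \ref{mtr4}(1) then forces $R$ to be Cohen-Macaulay.

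For (a), I would invoke the fact that MJ-log canonical singularities are Du Bois in characteristic zero, a generalization of the Kov\'acs--Schwede theorem for log canonical singularities (see \cite{KS}, and the MJ-analog available through \cite{EI}); hence the singularity hypothesis in Theorem \ref{main0}(1) is satisfied. For (b), the unmixed, equidimensional, and Cohen-Macaulay-in-codimension-$l$ hypotheses combine to give $(S_{l+1})$ in the form most useful for us, namely the dimension bound
\[
\dim \Ext_S^{n-i}(R,\omega_S) \;\leq\; i - (l+1) \qquad \forall \; i < d,
\]
of Proposition \ref{algSl}, which records the vanishing of the deficiency modules off the non-CM locus.

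The crux is (c), which is a Bertram--Ein--Lazarsfeld type regularity bound
\[
\reg R \;\leq\; d_1 + d_2 + \cdots + d_e - e,
\]
so that the hypothesis $\sum_{i=1}^e d_i \leq e + l$ yields $\reg R \leq l < l+1$. My strategy is to pick a generic regular sequence $f_1,\ldots,f_e \in I$ with $\deg f_i = d_i$, form the complete intersection $J = (f_1,\ldots,f_e)$ with $\reg(S/J) = \sum d_i - e$, and use the MJ-log canonical hypothesis together with a Kodaira-type vanishing on a log resolution to compare $\reg R$ with $\reg(S/J)$. This is in the spirit of de Fernex--Ein \cite[Corollary~1.3]{dFE}, with the non-lci case handled via the multiplier/adjoint ideal apparatus available for MJ-log canonical singularities through \cite{EI}.

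The main obstacle is establishing (c): extending the adjoint-ideal techniques of \cite{dFE} beyond the local complete intersection case is the essential geometric content of the corollary, while the remaining ingredients are an assembly of the machinery of Section \ref{prelim} plus the implication MJ-log canonical $\Rightarrow$ Du Bois.
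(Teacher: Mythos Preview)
There is a genuine gap at your step (b). The hypotheses ``unmixed, equidimensional, and Cohen--Macaulay in codimension $l$'' do \emph{not} imply that $R$ satisfies $(S_{l+1})$, nor the equivalent bound $\dim \Ext_S^{n-i}(R,\omega_S)\le i-(l+1)$ of Proposition~\ref{algSl}. Cohen--Macaulayness in codimension $l$ only says that the non-CM locus has dimension at most $d-l-1$, hence $\dim K_i\le d-l-1$ for every $i<d$; it says nothing about $\depth R_\pp$ at primes with $\dim R_\pp>l$. The ring of Remark~\ref{rem:negh2} is already a counterexample: it is unmixed, equidimensional of dimension $n$, and Cohen--Macaulay on the punctured spectrum (so CM in codimension $n-1$), yet it satisfies only $(S_{n-1})$, not $(S_n)$. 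Consequently your route through $(MT_{l+1})$ and Proposition~\ref{mtr4}(1) cannot be completed: under the stated hypotheses $R$ need not be $(MT_{l+1})$ at all.

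The paper's argument is organized differently and bypasses $(MT_r)$ entirely. Set $r=d_1+\cdots+d_e$. The MJ-log canonical hypothesis is used twice. First, via \cite{Niu}, \cite{Ish} it is translated into log canonicity of the pair $(\mathbb{P}^{n-1},\widetilde{I}^e)$, and then \cite[Corollary~5.1]{dFE} yields $H_\m^i(R)_j=0$ for $0<i<d$ and $j\ge r-n+1$, equivalently $[K_i]_{\le n-r-1}=0$. Second, MJ-log canonical $\Rightarrow$ Du Bois by \cite[Theorem~7.7]{dFD}, so Proposition~\ref{p:koda} gives $\reg K_i\le\dim K_i\le d-l-1$, the last inequality being exactly what ``CM in codimension $l$'' supplies. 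The numerical hypothesis $e+l\ge r$ rewrites as $d-l-1\le n-r-1$, so each $K_i$ is generated in degrees $\le n-r-1$ while vanishing in all such degrees; hence $K_i=0$ for $i<d$ and $R$ is Cohen--Macaulay. Note also that the input from \cite{dFE} is a graded vanishing of $H_\m^i(R)$ in the intermediate range $0<i<d$, not a bound on $\reg R$ itself (which would additionally require control of $H_\m^d(R)$); so your step (c) is aiming at a slightly different, and unnecessary, target.
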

\begin{proof}
By \cite[Proposition 2.7]{Niu} (see also \cite{Ish}), $X$ is MJ-log canonical is equivalent to saying that the pair $(\mathbb{P}^{n-1}, \widetilde{I}^e)$ is log canonical. Let $r= d_1+\dots + d_e$. Now applying \cite[Corollary 5.1]{dFE}, we have
$$H_\m^i(I)_j=0 \text{ for all $i>1$ and $j\geq r-n+1$. }$$
Since we have $0\to I\to S\to R\to 0$, the long exact sequence of local cohomology tells us that $H_\m^i(R)_j=0$ for $0<i<d$ and $j\geq r-n+1$. This is  equivalent to saying that  for each such $i$, ${[K_i]}_{\leq n-r-1}=0$ with $K_i = \Ext_S^{n-i}(R,\omega_S)$. However, since MJ-log canonical singularities are Du Bois by \cite[Theorem 7.7]{dFD}, Proposition \ref{p:koda} implies that $\reg K_i\leq \dim K_i \leq d-l-1$. Our assumption says that $d-l-1\leq  n-r-1$, thus $K_i=0$ for all $i<d$ and hence $R$ is Cohen-Macaulay.
\end{proof}

\begin{corollary}\label{cor3}
Let $R=\kk[x_1,\dots, x_n]/I$ be a standard graded algebra over a field $\kk$ of characteristic $0$. Let $e=\height I$ and $d_1 \geq d_2 \geq \dots $ be the degree sequence of a minimal set of generators for $I$. Assume that $\Proj R$ is a  smooth variety, $n\geq 2e+3$ and $n> d_1+\dots +d_e$. Then $R$ is Gorenstein. In particular, if $e=2$ then $R$ is a complete intersection.
\end{corollary}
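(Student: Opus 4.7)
The plan is to combine Corollary~\ref{cor2} with the Barth--Larsen theorem to establish $\omega_R\cong R(c)$ for some $c\in\Z$, and then invoke a classical structure theorem when $e=2$.

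First I would deduce Cohen--Macaulayness. Smoothness of $X=\Proj R$ implies $X$ is MJ-log canonical, $R$ is unmixed and equidimensional (since ``variety'' is integral), and $R$ is Cohen--Macaulay in codimension $l=d-1$. The hypothesis $n>d_1+\cdots+d_e$ rewrites as $e+(d-1)\geq d_1+\cdots+d_e$, which is precisely the inequality demanded by Corollary~\ref{cor2}. Hence $R$ is Cohen--Macaulay.

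Next I would apply the Barth--Larsen theorem to the smooth subvariety $X\hookrightarrow\mathbb P^{n-1}$ of dimension $d-1=n-e-1$. The inequality $n\geq 2e+3$ is exactly $2\dim X\geq (n-1)+2$, the classical Lefschetz-type bound that forces $\Pic X=\Z\cdot \mathcal O_X(1)$. As $X$ is smooth, $\omega_X$ is a line bundle, so $\omega_X\cong \mathcal O_X(c)$ for a unique integer $c$.

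To promote this to a graded module isomorphism, I use that $R$ is Cohen--Macaulay of dimension $d\geq 3$ (since $n\geq 2e+3$ gives $d\geq 3$ in every case), whence both $R$ and $\omega_R$ have depth $\geq 2$ at $\m$; combined with the sheafification identity $\widetilde{\omega_R}=\omega_X$ (which is valid precisely because $R$ is Cohen--Macaulay), this gives
\[
R=\bigoplus_{m\in\Z}H^0(X,\mathcal O_X(m)),\qquad \omega_R=\bigoplus_{m\in\Z}H^0(X,\omega_X(m))=\bigoplus_{m\in\Z}H^0(X,\mathcal O_X(c+m))=R(c),
\]
so $\omega_R\cong R(c)$ and $R$ is Gorenstein. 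For the final assertion, when $e=2$ the minimal free resolution of $S/I$ has length $2$ by Auslander--Buchsbaum and is self-dual by the Gorenstein property, forcing $I$ to have exactly two minimal generators, which automatically form a regular sequence since $\height I=2$; so $I$ is a complete intersection.

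The main obstacle is not any single deep step but a careful bookkeeping of grading shifts in the identification $\widetilde{\omega_R}=\omega_X$ paired with the Barth--Larsen conclusion, so that the twist $c$ in $\omega_R\cong R(c)$ matches on the nose; each individual ingredient (Corollary~\ref{cor2}, Barth--Larsen, Serre's structure theorem for height-$2$ Gorenstein ideals) is standard.
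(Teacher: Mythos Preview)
Your proposal is correct and follows essentially the same route as the paper: Cohen--Macaulayness via Corollary~\ref{cor2}, then $\Pic(X)=\Z\cdot\mathcal O_X(1)$ from Barth--Larsen (the paper cites Lyubeznik's version), and finally freeness of $\omega_R$. The paper phrases the last step as ``the class group of $R$ is trivial, and therefore the canonical module is free'' rather than your direct lift via depth $\geq 2$, and it does not spell out the $e=2$ case at all; your explicit invocation of Serre's structure theorem for codimension-two Gorenstein ideals is the standard argument.
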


\begin{proof}
By Corollary \ref{cor2}, $R$ is Cohen-Macaulay. Let $X=\Proj R$. The assumption of smoothness and $n\geq 2e+3$ implies that $\Pic(X) = \Z$, generated by $\mathcal O_X(1)$, see \cite[Theorem 11.4]{Lyu}.  It follows that the class group of $R$ is trivial, and therefore the canonical module is free.
\end{proof}

Our methods also give streamlined proof and sometimes strengthen known results. Here is a statement on $h_2$ that can be seen as a modest extension of \cite[Theorem 4.2]{EG}.

\begin{theorem}\label{h2}
Let $R=S/I=\kk[x_1,\dots, x_n]/I$ be a standard graded algebra over an algebraically closed field $\kk$ with $e=\height I$. Suppose that $\Proj R$ is connected in codimension one and the radical of $I$ contains no linear forms. Then $h_2(R)\geq 0$ and $e(R)\geq 1+ e$. If equality happens in either case, then $R_{\text{red}}= S/\sqrt{I}$ is Cohen-Macaulay of minimal multiplicity.\footnote{This means $R$ is Cohen-Macaulay and $e(R)=e+1$ in our context.}
\end{theorem}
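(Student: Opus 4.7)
The plan is to combine the framework of condition $(MT_2)$ from Section \ref{prelim} with an inductive reduction to the one-dimensional case via generic hyperplane sections, followed by a classical argument on zero-dimensional nondegenerate subschemes of projective space.

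First I would verify the low-degree vanishing $H_{\mm}^0(R)_{\leq 1}=0$. The degree $0$ part is trivial; for degree $1$, observe that $H_{\mm}^0(R)_1=I^{\mathrm{sat}}_1$, and since $\Proj R$ is connected in codimension one it is equidimensional, so $\mm$ is not a minimal prime of $I$ and $\sqrt{I^{\mathrm{sat}}}=\sqrt{I}$. Thus $I^{\mathrm{sat}}_1\subseteq \sqrt{I}_1=0$ by hypothesis. Proposition \ref{mtr2} with $r=2$ then lets me cut by a generic linear form $l$, preserving $h_0$ and $h_1$, weakly decreasing $h_2$, and preserving $c_2$ when $d>1$. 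A Bertini-type argument---a generic hyperplane section of a nondegenerate, connected-in-codimension-one, equidimensional scheme of positive dimension remains nondegenerate and connected in codimension one---propagates the hypotheses of Theorem \ref{h2} to $R/lR$. Iterating reduces to a one-dimensional quotient $\bar R$ whose $\Proj$ is a zero-dimensional, nondegenerate, equidimensional subscheme of $\PP^{e}$.

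For such $\bar R$ the classical projective geometry of $0$-dimensional nondegenerate subschemes gives $e(\bar R)\geq 1+e$ (a nondegenerate $0$-scheme in $\PP^e$ has degree at least $e+1$) and $\dim I(\bar R)_2\leq \binom{e+1}{2}$ (the classical bound on quadrics), the latter being equivalent to $h_2(\bar R)\geq 0$; pushing these back through the inductive reductions yields Part~1 of the theorem for $R$. For the equality case, I would first handle the reduced situation: when $R$ is reduced, equality forces the base-case zero-dimensional scheme to consist of exactly $e+1$ points in linearly general position in $\PP^{e}$, whose saturated homogeneous coordinate ring is Cohen--Macaulay of minimal multiplicity, and this property pulls back through the hyperplane sections. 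For general $R$, since $\sqrt{I}_{\leq 1}=0$ the nilradical $N=\sqrt{I}/I$ satisfies $N_0=N_1=0$, so $h_2(R)=h_2(R_{\mathrm{red}})+\dim_\kk N_2\geq h_2(R_{\mathrm{red}})$ and $e(R)\geq e(R_{\mathrm{red}})$; combined with the inequalities already established for $R_{\mathrm{red}}$, equality for $R$ forces equality for $R_{\mathrm{red}}$, and the reduced case yields the conclusion.

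The main obstacle will be the Bertini-type persistence of ``connected in codimension one'' and of ``the radical contains no linear forms'' under a generic hyperplane section in the possibly reducible, possibly non-reduced setting---the latter is delicate because $\sqrt{I+(l)}$ need not coincide with $\sqrt{I}+(l)$, so some care is required to ensure that a generic $l$ avoids creating new linear forms in the radical after passing to the quotient. A secondary subtlety is that the theorem deliberately concludes Cohen--Macaulayness only for $R_{\mathrm{red}}$: non-reduced structure can coexist with the extremal equality without forcing $R$ itself to be Cohen--Macaulay, which is why the bootstrap through $R_{\mathrm{red}}$ is essential rather than a direct application of Proposition \ref{mtr4} to $R$.
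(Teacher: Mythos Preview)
Your strategy is essentially the paper's: reduce the dimension by generic hyperplane sections while tracking $h_0,h_1,h_2,c_2$ via Proposition~\ref{mtr2}, invoke Bertini to propagate the hypotheses, and finish via the $(MT_2)$ machinery (you phrase the base case as classical facts about nondegenerate $0$-schemes in $\PP^e$, which amounts to the same thing). The one organizational difference worth noting is that the paper passes to $R_{\mathrm{red}}$ at the very outset, using exactly the inequalities $h_2(R)\geq h_2(R_{\mathrm{red}})$ and $e(R)\geq e(R_{\mathrm{red}})$ that you reserve for the equality case. Doing this first buys a lot: once $R$ is reduced one has $H^1_{\mm}(R)_{\leq 0}=0$ directly from connectedness, the generic $l$ is a nonzerodivisor, and the Bertini input needed is just that $\Proj R'$ stays reduced and connected in codimension one. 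Your route instead must re-verify at each step that $\sqrt{I+(l)}$ acquires no new linear forms, and (as you correctly flag) this is precisely the delicate point---and its verification in any case goes through the reduced scheme $(\Proj R)_{\mathrm{red}}$, so nothing is gained by postponing the reduction. For the equality case your ``pull back through hyperplane sections'' is exactly the paper's final claim that $R/lR$ Cohen--Macaulay forces $R$ Cohen--Macaulay when $\dim R\geq 2$ and $R$ is reduced; the paper gets the Cohen--Macaulayness of $\bar R$ by checking that $\bar R$ is $(MT_2)$ and invoking Proposition~\ref{mtr4}, rather than by identifying the $0$-scheme explicitly.
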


\begin{proof}
Let $J= \sqrt{I}$ and $T=R_{\text{red}}=S/J$. By assumption, $J, I$ contains no linear forms, so $h_2(R) ={e+1 \choose 2}-\dim_{\kk}I_2$ and $h_2(T) ={e+1 \choose 2}-\dim_{\kk}J_2$. It follows that $h_2(R)\geq h_2(T)$. One also easily see  that $e(R) \geq e(T)$. So we may replace $R$ by $T$ and assume $R$ is reduced. We may assume now that $d =\dim R\geq 2$.

Let $l$ be a generic linear form, $\bar R= R/lR$ and $R'={\bar R}/N$ with $N=H^0_{\m}(\bar R)$.
Bertini Theorem tells us that $\Proj R'$ is still connected in codimension one and reduced. Since $R$ is standard graded, we have short exact sequence of graded $R$-modules
$$0\to H_\m^0(R)\to R\to \oplus_{n\in\mathbb{Z}}H^0(X, O_X(n)) \to H_\m^1(R)\to 0$$
where $X=\Proj R$. Now clearly $H^0(X, O_X(n))=0$ when $n<0$ and $H^0(X, O_X)=\kk$ by our assumption that $X$ is connected and $\kk$ is algebraically closed. The exact sequence above then tells us that
$H^1_{\m}(R)_{\leq 0}=0$, so $H_{\m}^0(\bar R)_{\leq 1}=0$. It follows as in the proof of Proposition \ref{mtr2} that $h_i(R')=h_i(R)$ for $i<2$, $h_2(R')\leq h_2(R)$ and $c_2(R)=c_2(R')$. Thus by induction on dimension, we can assume $\dim R'= \dim \bar R=2$. But here $H_{\mm}^0(\bar R)_{\leq 1}= H_{\mm}^1(\bar R)_{\leq 0} =0$ say precisely that $\bar R$ is $(MT_2)$ (see Proposition \ref{easy}). Our Theorem \ref{mtr3} now applies to show the needed inequalities (note that $e+1= h_0(R) + h_1(R)$).  If any equality happens, then Theorem \ref{mtr4} asserts that $\bar R$ is Cohen-Macaulay (necessarily of regularity $2$). To finish we now claim that if $\dim R\geq 2$ and $R/lR$ is Cohen-Macaulay for a generic $l$, then so is $R$. Let $N =H_\m^0(R)$ and $R' = R/N$. We obtain an exact sequence $0\to N/lN \to R/lR \to R'/lR' \to 0$. But since $R/lR$ is Cohen-Macaulay of dimension at least $1$, $N/lN=0$, which forces $N=0$, thus $l$ is regular on $R$ and we are done.
\end{proof}

\begin{eg} It is important that $\kk$ is algebraically closed in the previous theorem. Take $R = \R[s,t,is,it]$ which is a domain. Then $R \cong S/I$ with $S =\R[a,b,c,d]/I$ and $I= (a^2+c^2, b^2+d^2, ad-bc, ab+cd)$ and $h_2(R)=-1$. Note that $R\otimes_{\R} \C  \cong \C[x,y,u,v]/(xu,xv,yu,yv)$.
\end{eg}
Next, we give a formula relating the $h$-vectors of $R$ and the (deficiency) $\Ext$ modules. Recall the notations at the beginning of section \ref{prelim}.

\begin{proposition}\label{abs}
Let $R=S/I$ be a standard graded algebra of dimension $d$. For $i=0,\ldots ,d$ set $K_i=\Ext_S^{n-i}(R,\omega_S)$. Then we have the following relations between the numerators of Hilbert series:
$$p_R(1/t)t^d = \sum_{i=0}^d(-1)^{d-i}p_{K_i}(t)(1-t)^{d-\dim K_i} $$

\end{proposition}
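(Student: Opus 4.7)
The plan is to reformulate the desired identity as an equality of rational functions involving the Hilbert series of $R$ and the $K_i$, and then to verify it via a finite graded free resolution of $R$ over $S$.

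First I would divide both sides of the claimed identity by $(1-t)^d$ (working in the field of rational functions in $t$). Using $H_{K_i}(t)=p_{K_i}(t)/(1-t)^{\dim K_i}$, the right hand side becomes $\sum_{i=0}^d (-1)^{d-i}H_{K_i}(t)$. For the left hand side, the identity $(1-1/t)^d = (-1)^d(1-t)^d/t^d$ gives $p_R(1/t)t^d/(1-t)^d = (-1)^d H_R(1/t)$. Hence, after pulling out the sign $(-1)^d$, the proposition reduces to the rational function identity
\[
H_R(1/t)=\sum_{i=0}^{d}(-1)^{i}H_{K_i}(t).
\]

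To establish this reduced identity, I would take a finite graded free resolution $F_\bullet\to R\to 0$ over $S$ (existing by Hilbert's syzygy theorem), with $F_j=\bigoplus_k S(-a_{jk})$. Additivity of Hilbert series under short exact sequences yields
\[
H_R(t) \;=\; \sum_j (-1)^j H_{F_j}(t) \;=\; \frac{1}{(1-t)^n}\sum_j(-1)^j\sum_k t^{a_{jk}},
\]
and substituting $t\mapsto 1/t$ and clearing the factor $(1-1/t)^n$ gives
\[
H_R(1/t)\;=\;\frac{(-1)^n}{(1-t)^n}\sum_j(-1)^j\sum_k t^{\,n-a_{jk}}.
\]
On the other hand, applying $\Hom_S(-,\omega_S)$ to the resolution and using that $K_i=\Ext^{n-i}_S(R,\omega_S)$ is the cohomology of the (finite) complex $\Hom_S(F_\bullet,\omega_S)$, additivity of Hilbert series on a bounded complex (alternating sum over terms equals alternating sum over cohomology, as rational functions) gives
\[
\sum_i(-1)^{n-i}H_{K_i}(t)\;=\;\sum_j(-1)^j H_{\Hom_S(F_j,\omega_S)}(t)\;=\;\frac{1}{(1-t)^n}\sum_j(-1)^j\sum_k t^{\,n-a_{jk}},
\]
using $\Hom_S(S(-a),\omega_S)=S(a-n)$. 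Multiplying by $(-1)^n$ and comparing with the previous display yields exactly $H_R(1/t)=\sum_i(-1)^i H_{K_i}(t)$, as required.

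The last step is cosmetic: multiply back by $(-1)^d(1-t)^d$, observe that $H_{K_i}(t)(1-t)^d=p_{K_i}(t)(1-t)^{d-\dim K_i}$ and $(-1)^d H_R(1/t)(1-t)^d=p_R(1/t)t^d$, and collect signs. The main conceptual obstacle is purely notational: the series $H_R(t)$ and $H_{K_i}(t)$ are supported in disjoint ranges of degrees, so the identity is genuinely a rational function (not formal power series) statement; the use of a finite free resolution is what allows all manipulations to take place in the field of rational functions, sidestepping any convergence issues.
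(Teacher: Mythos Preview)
Your proof is correct and follows essentially the same route as the paper: both reduce the claim to the rational-function identity $H_R(1/t)=\sum_i(-1)^i H_{K_i}(t)$ (equivalently $\sum_i(-1)^{n-i}H_{K_i}(t)=(-1)^n H_R(1/t)$) and then unwind the normalizations. The only difference is that the paper obtains this identity by invoking the main theorem of Avramov--Buchweitz--Sally \cite{ABS} as a black box, whereas you give the direct, self-contained argument via a finite graded free resolution and the Euler-characteristic principle; your argument is in fact the standard proof of the cited result in this special case.
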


\begin{proof}
Applying the main Theorem of \cite{ABS} with $M=R, N=\omega_S = S(-n)$, we obtain an equality of rational functions:
$$\sum_i (-1)^{n-i} H_{K_i}(t) = \frac{H_R(1/t)H_{\omega_S}(t)}{H_S(1/t)} $$
Since $H_{K_i}(t)= \frac{p_{K_i}(t)}{(1-t)^{\dim K_i}}, H_{\omega_S}(t) = \frac{t^n}{(1-t)^n}, H_S(1/t) = \frac{t^n}{(t-1)^n}$, the assertion follows.
\end{proof}

\begin{corollary}
\label{cor4}
Retain the notations of Proposition \ref{abs}. Suppose that for $i<d$, we have $H_{m}^i(R)_{>0} =0$ (for example, this holds if $R$ is $F$-pure or if $R$ is Du Bois \cite[Theorem 4.4]{Ma}). Then  $$h_d(R) = \sum_{0\leq i \leq d}(-1)^{d-i}\dim_{\kk} H^i_{\m}(R)_0.$$
\end{corollary}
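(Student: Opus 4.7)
The plan is to take the coefficient of $t^0$ on both sides of the identity of Proposition~\ref{abs}:
$$p_R(1/t)\,t^d \;=\; \sum_{i=0}^d(-1)^{d-i}\,p_{K_i}(t)\,(1-t)^{d-\dim K_i}.$$
Since $p_R(1/t)\,t^d = h_0 t^d + h_1 t^{d-1} + \cdots + h_s t^{d-s}$, the coefficient of $t^0$ on the left is exactly $h_d(R)$ (with the convention $h_d=0$ when $s<d$). So it suffices to identify the coefficient of $t^0$ in each summand on the right.

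The key observation is that the hypothesis $H^i_\m(R)_{>0}=0$, which in either geometric setting invoked in the corollary in fact holds for all $i\le d$ by \cite[Theorem~4.4]{Ma}, translates through graded local duality into $(K_i)_{<0}=0$ for every $i$. Consequently $H_{K_i}(t)$ is a power series in non-negative powers of $t$, and hence so is $p_{K_i}(t)=(1-t)^{\dim K_i}H_{K_i}(t)$. Its value at $t=0$ is
$$p_{K_i}(0) \;=\; \dim_\kk (K_i)_0 \;=\; \dim_\kk H^i_\m(R)_0,$$
the last equality being graded local duality in degree zero.

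Because $\dim K_i\le d$, the polynomial $(1-t)^{d-\dim K_i}$ has constant term $1$ and only non-negative powers of $t$; hence the product $p_{K_i}(t)(1-t)^{d-\dim K_i}$ is a polynomial in non-negative powers of $t$ whose coefficient of $t^0$ is precisely $\dim_\kk H^i_\m(R)_0$. Reinserting the signs and summing over $i$ yields the stated formula. The main subtlety---and the reason one must invoke the strengthened form of the vanishing---is the summand $i=d$: there $d-\dim K_d=0$, so the product collapses to $p_{\omega_R}(t)$, and one genuinely needs $(\omega_R)_{<0}=0$, equivalently $H^d_\m(R)_{>0}=0$. This extra input is exactly what \cite[Theorem~4.4]{Ma} supplies in both the Du Bois and $F$-pure settings; without it the identity would fail already for Cohen--Macaulay rings with $\reg R>d$.
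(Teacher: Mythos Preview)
Your argument follows the same route as the paper's: take the coefficient of $t^0$ in the identity of Proposition~\ref{abs}, after using graded local duality to translate the hypothesis into $(K_i)_{<0}=0$. You have, however, been more careful than the paper. The paper's proof only records $[K_i]_{<0}=0$ for $i<d$ and then says the result ``follows immediately,'' leaving the $i=d$ summand unaddressed. Your observation is correct: that summand is $p_{\omega_R}(t)$, and its coefficient of $t^0$ equals $\dim_\kk H^d_\m(R)_0$ only when $(\omega_R)_{<0}=0$, i.e.\ $H^d_\m(R)_{>0}=0$. Without this the corollary as literally stated fails---for instance $R=\kk[x,y]/(y^3)$ is Cohen--Macaulay of dimension~$1$, so the hypothesis for $i<1$ is vacuous, yet $h_1(R)=1$ while $\dim_\kk H^1_\m(R)_0=\dim_\kk R_1=2$. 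Your repair, invoking the full strength of \cite[Theorem~4.4]{Ma} to obtain the vanishing also at $i=d$ in the Du Bois and $F$-pure settings, is exactly right. The paper's only downstream use of this corollary (Corollary~\ref{corh_d}) is unaffected, since there the assumption $\reg R=d-1$ already forces $H^d_\m(R)_{\geq 0}=0$.
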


\begin{proof}
By (graded) local duality, our assumption implies that for each $i<d$, $[K_i]_{<0}=0$, so the polynomial $p_{K_i}(t)$ contains no negative powers of $t$. What we need to prove follows immediately from Proposition \ref{abs}.
\end{proof}

Next we point out that in certain situations, the negativity of  $h_d(R)$ is guaranteed if $R$ is not Cohen-Macaulay. This class of examples shows that one can not hope to improve of our main theorem too  much.

\begin{corollary}\label{corh_d}
Let $R=\kk[x_1,\dots, x_n]/I$ be a standard graded algebra with $\dim R=d$. Suppose that $\depth R=d-1 = \reg (R)=d-1$ and $H_{\m}^d(R)_{-1}=0$. Then $h_d(R)<0$.
\end{corollary}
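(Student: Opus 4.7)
The plan is to apply Corollary \ref{cor4} directly and observe that all but one term in the resulting formula for $h_d(R)$ is forced to vanish, leaving a single negative contribution. The proof breaks naturally into three short steps: verify the hypotheses of Corollary \ref{cor4}, reduce the formula to two terms, and then use the assumption on $H_\m^d(R)_{-1}$ to pin down where $\reg R = d-1$ is realized.

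First, I would check that Corollary \ref{cor4} applies, i.e.\ that $H_\m^i(R)_{>0}=0$ for $i<d$. Since $\depth R = d-1$, we have $H_\m^i(R)=0$ for $i<d-1$, so only $i=d-1$ is in question. But $\reg R = d-1$ means $a_i(R)+i \leq d-1$ for every $i$, and in particular $a_{d-1}(R) \leq 0$, which is exactly $H_\m^{d-1}(R)_{>0}=0$. Applying Corollary \ref{cor4} and discarding the indices $i<d-1$ where $H_\m^i(R)$ itself vanishes gives
$$h_d(R) \;=\; -\dim_{\kk} H_\m^{d-1}(R)_0 \;+\; \dim_{\kk} H_\m^d(R)_0.$$
The second term already vanishes: $\reg R = d-1$ forces $a_d(R) \leq -1$, so $H_\m^d(R)_0 = 0$. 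Hence $h_d(R) = -\dim_{\kk} H_\m^{d-1}(R)_0$, and it remains to show that this last space is nonzero.

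This is where the hypothesis $H_\m^d(R)_{-1}=0$ enters. Combined with $a_d(R) \leq -1$ it upgrades to $a_d(R) \leq -2$, so $a_d(R)+d \leq d-2 < d-1$. Because $H_\m^i(R)=0$ for all $i<d-1$, those indices contribute $-\infty$ to $\max_i\{a_i(R)+i\}$; therefore the only remaining index at which the regularity $d-1$ can be realized is $i=d-1$, forcing $a_{d-1}(R) = 0$ and so $H_\m^{d-1}(R)_0 \neq 0$. This yields $h_d(R) < 0$, as required.

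The only real substance is the bookkeeping with the $a$-invariants in the last step; the identity extracting $h_d(R)$ from $\dim_{\kk} H_\m^{i}(R)_0$ is already packaged in Corollary \ref{cor4}, and the hypotheses of that corollary are immediate from the assumed depth and regularity. So no obstacle is expected beyond a careful accounting of which $a_i(R)+i$ can match $\reg R$.
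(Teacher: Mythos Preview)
Your proof is correct and follows essentially the same approach as the paper's: verify the vanishing hypotheses of Corollary~\ref{cor4} from $\depth R=d-1$ and $\reg R=d-1$, reduce the formula to $h_d(R)=-\dim_\kk H_\m^{d-1}(R)_0$, and then use $H_\m^d(R)_{-1}=0$ to force the regularity to be attained at $i=d-1$. The paper's argument is more terse but the logic is identical; your explicit $a$-invariant bookkeeping simply spells out the step the paper compresses into one sentence.
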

\begin{proof}
Since $\depth R=d-1$, $\reg R=d-1$ is equivalent to the condition $H_{\m}^{d-1}(R)_{>0}=H_{\m}^d(R)_{\geq 0} =0$. Since $H_\m^d(R)_{-1}=0$ and $\reg R=d-1$, we have $H^{d-1}_{\m}(R)_0\neq 0$. Now by Corollary \ref{cor4}, $h_d(R) = - \dim_{\kk} H^{d-1}_{\m}(R)_0<0$.
\end{proof}

\begin{eg}
Let $A$ be a standard graded Cohen-Macaulay  $\kk$-algebra of dimension at least $2$ with zero $a$-invariant and $B=\kk[x,y]$. Then the Segre product $R= A\sharp B$ satisfies the assumptions of Corollary \ref{corh_d}. It is even $(S_{d-1})$.
\end{eg}
\section{Some further results and open questions}\label{speculate}

Our results and examples suggest the following natural question, which can be viewed as a vast generalization of everything discussed so far:

\begin{question}\label{openques}
Assume that $\chara \kk=0$ and $R$ satisfies $(S_r)$ and is Du Bois in codimension $r-2$. If $(h_0,\ldots ,h_s)$ is the  $h$-vector of $R$, is it true that $h_i\geq 0$ whenever $i=0,\ldots ,r$? If furthermore $R$ has Castelnuovo-Mumford regularity less than $r$, is it true that $R$ is Cohen-Macaulay?
\end{question}

\begin{remark}
Question \ref{openques} is true if $r=2$: in fact we can assume $\kk=\bar \kk$ and that $I$ contains no linear forms.
The condition that $R$ is Du Bois in codimension 0, together with $(S_1)$, says that $I=\sqrt{I}$. Further, the condition that $R$ satisfies $(S_2)$ yields that $\Proj R$ is connected in codimension 1 by \cite[Proposition 2.1, Theorem 2.2]{Ha}. So we can apply Theorem \ref{h2}. 
\end{remark}

The first step to answer Question \ref{openques} would be to relax the condition $X =\Proj R$ is Du Bois  (i.e., $R$ is Du Bois in codimension $\leq d-1$) in key Proposition \ref{p:koda} to $R$ being Du Bois in lower codimension. This amounts to strong Kodaira-vanishing type statements.   At this point we can prove the following, which is a generalization of \cite[Theorem 3.15]{HuSm}.

\begin{proposition}
Let $0<k\leq d-1$. Suppose $X$ is $(S_{d-k})$ and is Du Bois in codimension $d-k-1$. Then we have $H_\m^{j}(R)_{<0}=0$ for all $j\leq d-k$, or equivalently, $\Ext_S^{n-j}(R,\omega_S)_{>0}=0$ for all $j\leq d-k$.
\end{proposition}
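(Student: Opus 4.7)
By graded Matlis duality the vanishing $H_\m^{j}(R)_{<0}=0$ is equivalent to $\Ext_S^{n-j}(R,\omega_S)_{>0}=0$, so I work with the Ext formulation. The hypothesis that $X$ is $(S_{d-k})$ means that $R_\mathfrak{p}$ is $(S_{d-k})$ for every homogeneous prime $\mathfrak{p}\neq\m$. Local duality applied to $R_\mathfrak{p}$, together with the depth bound coming from $(S_{d-k})$, then yields $\Ext_S^{n-j}(R,\omega_S)_\mathfrak{p}=0$ whenever $j\leq d-k$ and $\mathfrak{p}\neq\m$. Consequently $\Ext_S^{n-j}(R,\omega_S)$ is $\m$-torsion, hence of finite length, for every $j\leq d-k$.

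Next I would imitate the $j=0$ case of Proposition \ref{p:koda}. The aim is to produce, for each $j\leq d-k$, a degree-preserving injection
\[\Ext_S^{n-j}(R,\omega_S)_{\geq 0}\ \hookrightarrow\ H_I^{n-j}(\omega_S)_{\geq 0}.\]
Given such an injection, because the source is $\m$-torsion its image sits inside $H_\m^0 H_I^{n-j}(\omega_S)_{\geq 0}\cong H_\m^0 H_I^{n-j}(S)_{\geq -n}$, and the latter vanishes in positive degrees by the main theorem of \cite{MZ}, concluding the proof.

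The main obstacle is constructing this injection under the weaker Du Bois hypothesis, since Theorem \ref{theorem--MSS injectivity}(2) provides it only when $X$ is globally Du Bois. In the Ma-Schwede-Shimomoto argument the injection is built from the triangle $\O_R\to\underline{\Omega}_R^0\to C$, with $C$ being $\m$-torsion exactly when $X$ is Du Bois; under our hypothesis $C$ is instead supported on the non--Du Bois locus of $\Spec R$, of dimension $\leq k$. My plan is to combine Theorem \ref{theorem--key injectivity} (whose injection $h^{-j}(\underline{\omega}_R^\bullet)\hookrightarrow h^{-j}(\omega_R^\bullet)=\Ext_S^{n-j}(R,\omega_S)$ has cokernel sheafifying on $X$ to a sheaf supported on the non--Du Bois locus of $X$, of dimension $\leq k-1$) with Ambro's Kodaira-type vanishing (Theorem \ref{theorem--Ambro's vanishing}) applied to $\underline{\omega}_X^\bullet$. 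Crucially, the Serre condition from the first paragraph already forces both $h^{-j}(\underline{\omega}_R^\bullet)$ and the cokernel inside $\Ext_S^{n-j}(R,\omega_S)$ to be finite length in the range $j\leq d-k$, so the required positive-degree vanishings can be derived locally at $\m$ and then recombined to produce the desired injection; the small-dimensional support of the cone prevents any obstruction from arising outside $\m$.
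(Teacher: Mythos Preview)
Your first paragraph is correct: the $(S_{d-k})$ hypothesis on $X$ does force $\Ext_S^{n-j}(R,\omega_S)$ to have finite length for every $j\leq d-k$. The gap is in what follows.

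The decisive step---producing the injection $\Ext_S^{n-j}(R,\omega_S)_{\geq 0}\hookrightarrow H_I^{n-j}(\omega_S)_{\geq 0}$ under only a partial Du Bois hypothesis---is never actually carried out, and the tools you name do not assemble into it in the way you suggest. Once $\Ext_S^{n-j}(R,\omega_S)$ (and hence its submodule $h^{-j}(\underline{\omega}_R^\bullet)$ and the cokernel $C_j$) is known to have finite length, Ambro's vanishing becomes vacuous for these objects: Theorem~\ref{theorem--Ambro's vanishing} controls $H^a(X,-\otimes L^m)$ for $a\geq 1$, which corresponds to $H_\m^a(-)_{>0}$ for $a\geq 2$, and those already vanish for finite-length modules. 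What you actually need is the vanishing of the module itself in positive degrees, i.e.\ of $H_\m^0(-)_{>0}$, and neither Theorem~\ref{theorem--key injectivity} nor Theorem~\ref{theorem--Ambro's vanishing} speaks to that. The phrase ``derived locally at $\m$ and then recombined to produce the desired injection'' does not name a mechanism; in particular nothing you cite connects $h^{-j}(\underline{\omega}_R^\bullet)$ or $C_j$ to $H_I^{n-j}(\omega_S)$.

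The paper argues differently and avoids the MSS injection and \cite{MZ} entirely. It first reduces (by replacing $k$ with any larger $k'\leq d-1$) to the single value $j=d-k$, then applies Serre duality on $X$ to identify $H^{d-k-1}(X,L^{-m})^\vee$ with $\mathbb{H}^{-d+k+1}(X,\omega_X^\bullet\otimes L^m)$, and runs the hypercohomology spectral sequence $H^p(X,h^q(\omega_X^\bullet)\otimes L^m)\Rightarrow\mathbb{H}^{p+q}$. The $(S_{d-k})$ condition gives $\dim h^{-(d-1)+i}(\omega_X^\bullet)\leq k-1-i$, so every $E_2$ contribution except $H^k(X,h^{-(d-1)}(\omega_X^\bullet)\otimes L^m)$ vanishes by dimension. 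For this surviving term the key injectivity yields $0\to h^{-(d-1)}(\underline{\omega}_X^\bullet)\to h^{-(d-1)}(\omega_X^\bullet)\to C\to 0$ with $\dim\Supp C\leq k-1$ (this is exactly where ``Du Bois in codimension $d-k-1$'' enters), whence $H^k(X,C\otimes L^m)=0$; and Ambro kills $H^k(X,h^{-(d-1)}(\underline{\omega}_X^\bullet)\otimes L^m)$. The point is that Ambro is applied to the \emph{top} cohomology sheaf $h^{-(d-1)}(\underline{\omega}_X^\bullet)$, which is full-dimensional---not to a finite-length module---and that is why it has content. Your outline never reaches this sheaf, so it has no way to exploit the Du Bois hypothesis.
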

\begin{proof}
If $k=d-1$, we know $X$ is $(S_1)$ and generically reduced and thus $X$ is reduced. Then $H_\m^1(R)_{<0}$ vanishes. For the case $k<d-1$, notice that it is enough to show $H_\m^{d-k}(R)_{<0}=0$ (because the assumption remains valid for any $k'\geq k$). Hence it suffices to prove $H^{d-k-1}(X, L^{-m})=0$ for all $m\geq 1$ (here we used $k<d-1$). Since $X$ is $(S_{d-k})$, the possible non-vanishing cohomology of $\omega_X^\bullet$ are:
$$h^{-(d-1)}(\omega_X^\bullet), h^{-(d-1)+1}(\omega_X^\bullet),\dots, h^{-(d-1)+(k-1)}(\omega_X^\bullet),$$ and we have $$\dim h^{-(d-1)+1}(\omega_X^\bullet)\leq k-2, \dim h^{-(d-1)+2}(\omega_X^\bullet)\leq k-3, \dots, \dim h^{-(d-1)+(k-1)}(\omega_X^\bullet)\leq 0.$$
By Serre duality, $H^{d-k-1}(X, L^{-m})$ is dual to $\mathbb{H}^{-d+k+1}(X, \omega_X^\bullet\otimes L^m)$. Using spectral sequence, the possible $E_2$-page contributions to $\mathbb{H}^{-d+k+1}(X, \omega_X^\bullet\otimes L^m)$ are: $$H^k(X, h^{-(d-1)}(\omega_X^\bullet)\otimes L^m), H^{k-1}(X, h^{-(d-1)+1}(\omega_X^\bullet)\otimes L^m),\dots, H^1(X, h^{-(d-1)+(k-1)}(\omega_X^\bullet)\otimes L^m).$$ The latter ones $$H^{k-1}(X, h^{-(d-1)+1}(\omega_X^\bullet)\otimes L^m),\dots, H^1(X, h^{-(d-1)+(k-1)}(\omega_X^\bullet)\otimes L^m)$$ all vanish because dimension reasons (for example, $H^{k-1}(X, h^{-(d-1)+1}(\omega_X^\bullet)\otimes L^m)=0$ since $\dim h^{-(d-1)+1}(\omega_X^\bullet)\leq k-2$).

For $H^k(X, h^{-(d-1)}(\omega_X^\bullet)\otimes L^m)$, note that by Theorem \ref{theorem--key injectivity} we have a short exact sequence $$0\to h^{-(d-1)}(\underline{\omega}_X^\bullet)\to h^{-(d-1)}(\omega_X^\bullet)\to C\to 0$$ such that $\dim \Supp C\leq k-1$ since $X$ is Du Bois in codimension $(d-1)-k$. The induced long exact sequence of sheaf cohomology: $$0=H^k(X, h^{-(d-1)}(\underline{\omega}_X^\bullet)\otimes L^m)\to H^k(X, h^{-(d-1)}(\omega_X^\bullet)\otimes L^m)\to H^k(X, C\otimes L^m)=0,$$ where the first $0$ is by Theorem \ref{theorem--Ambro's vanishing}, shows that $H^k(X, h^{-(d-1)}(\omega_X^\bullet)\otimes L^m)=0$ for all $m\geq 1$. Hence all $E_2$-page contributions to $\mathbb{H}^{-d+k+1}(X, \omega_X^\bullet\otimes L^m)$ are $0$, it follows that $\mathbb{H}^{-d+k+1}(X, \omega_X^\bullet\otimes L^m)=0$ for all $m\geq 1$ and thus $H^{d-k-1}(X, L^{-m})=0$ for all $m\geq 1$.
\end{proof}

\end{document}